\documentclass[11pt]{amsart}
\usepackage[utf8]{inputenc}
\usepackage{graphicx,xcolor,verbatim}
\usepackage{amscd,amsmath,amsfonts,amssymb}
\usepackage{mathtools}
\usepackage[foot]{amsaddr}
\usepackage[top=1in, bottom=1.25in, left=1in, right=1in]{geometry}
\usepackage[pdfborder=000,pdftex=true]{hyperref}

\textwidth=6in \textheight=9.5in \topmargin=-0.5cm
\oddsidemargin=0.5cm \evensidemargin=0.5cm

\newtheorem{theorem}{Theorem}
\newtheorem{lemma}[theorem]{Lemma}
\newtheorem{proposition}[theorem]{Proposition}
\newtheorem{remark}[theorem]{Remark}

\newtheorem{definition}{Definition}

\numberwithin{equation}{section}

\newcommand{\R}{\mathbb R}
\newcommand{\N}{\mathbb N}

\newcommand{\SD}{\Sigma_{\mathcal D}}

\newcommand{\SN}{\Sigma_{\mathcal N}}

\begin{document}
	
\title[Subcritical nonlocal problems with mixed boundary conditions]{Subcritical nonlocal problems with\\ mixed boundary conditions}

\thanks{A. Ortega is partially supported by the State Research Agency of Spain, under research project PID2019-106122GB-I00.}

\author{Giovanni Molica Bisci}
\address[G. Molica Bisci]{Dipartimento di Scienze Pure e Applicate (DiSPeA), Università degli Studi di Urbino Carlo Bo\\
Piazza della Repubblica, 13 - 61029 Urbino, Italy}
\email{\tt giovanni.molicabisci@uniurb.it}

\author{Alejandro Ortega}
\address[A. Ortega]{Departamento de Matem\'{a}ticas, Universidad Carlos III de Madrid\\
Av. Universidad 30, 28911 Legan\'{e}s (Madrid), Spain}
\email{\tt alortega@math.uc3m.es}

\author{Luca Vilasi}
\address[L. Vilasi]{Department of Mathematical and Computer Sciences, Physical Sciences and Earth Sciences\\
University of Messina\\
Viale F. Stagno d’Alcontres, 31 - 98166 Messina, Italy}
\email{\tt lvilasi@unime.it}

\keywords{Fractional Laplacian, Variational Methods, $\nabla$-Theorems, Mixed Boundary Data, Superlinear and Subcritical Nonlinearities.\\
\phantom{aa} 2010 AMS Subject Classification: Primary: 35J20, 35A15, 35S15; Secondary: 49J35, 35J61.
}
	
\begin{abstract}
In this paper, by variational and topological arguments based on
linking and $\nabla$-theorems, we prove the existence of multiple solutions for the following nonlocal problem with mixed Dirichlet-Neumann boundary data,
\begin{equation*}
        \left\{
        \begin{tabular}{lcl}
        $(-\Delta)^su=\lambda u+f(x,u)$ & &in $\Omega$, \\[2pt]
        $\mkern+51mu u=0$& &on $\Sigma_{\mathcal{D}}$, \\[2pt]
        $\mkern+36mu \displaystyle \frac{\partial u}{\partial \nu}=0$& &on $\Sigma_{\mathcal{N}}$,
        \end{tabular}
        \right.
\end{equation*}
where $(-\Delta)^s$, $s\in (1/2,1)$, is the spectral fractional Laplacian operator, $\Omega\subset\mathbb{R}^N$, $N>2s$, is a smooth bounded domain, $\lambda>0$ is a real parameter, $\nu$ is the outward normal to $\partial\Omega$,  $\Sigma_{\mathcal{D}}$, $\Sigma_{\mathcal{N}}$ are smooth $(N-1)$--dimensional submanifolds of $\partial\Omega$ such that
$\Sigma_{\mathcal{D}}\cup\Sigma_{\mathcal{N}}=\partial\Omega$,
$\Sigma_{\mathcal{D}}\cap\Sigma_{\mathcal{N}}=\emptyset$ and
$\Sigma_{\mathcal{D}}\cap\overline{\Sigma}_{\mathcal{N}}=\Gamma$ is a smooth $(N-2)$--dimensional submanifold of $\partial\Omega$.
\end{abstract}

\maketitle
	
\section{Introduction}
A very interesting area of nonlinear analysis is that of elliptic equations involving nonlocal fractional operators; see, among others, the quoted paper \cite{valpal} as well as the recent monograph \cite{MRS} and the references therein. In the case of Dirichlet boundary conditions, an impressive number of existence and multiplicity results has been obtained over the last few decades by using different technical approaches. On the contrary, fractional problems with mixed Dirichlet-Neumann boundary data have not been investigated much 
due to the additional analytic difficulties that naturally arise when dealing with the notion of normal derivative in this setting. Some recent interesting results can be found in \cite{Barrios2020, carcolleoort2020regularity, colort2019the, Leonori2018, Ortega2023}.

Moving along this second direction, in this paper we analyze the multiplicity of solutions to the following nonlocal problem
\begin{equation}\label{problem}\tag{$P_\lambda$}
\left\{
        \begin{tabular}{lcl}
        $(-\Delta)^su=\lambda u + f(x,u)$ & &in $\Omega$, \\[3pt]
        $\mkern+21muB(u)=0$& &on $\partial\Omega$,
        \end{tabular}
        \right.
\end{equation}
where $\Omega\subset\R^N$ is a bounded domain with a smooth boundary, $N>2s$ and $s\in(1/2,1)$. The fractionality range $\frac{1}{2} < s < 1$ is the correct one for mixed boundary problems due to the natural embedding of the associated functional space, see Remark \ref{rem:range_s}. Here, $(-\Delta)^s$ denotes the spectral fractional Laplace operator on $\Omega$ endowed with the mixed Dirichlet-Neumann boundary conditions
$$
B(u)=u\chi_{\SD} +\frac{\partial u}{\partial\nu}\chi_{\SN},
$$
where $\nu$ is the outward unit normal to $\partial\Omega$, $\chi_A$ denotes the characteristic function of the set $A\subset\partial\Omega$ and $\Omega$ satisfies the following set of assumptions:
\begin{itemize}
\item[$(\Omega_1)$] $\Omega\subset\R^N$ is a bounded Lipschitz domain;
\item[$(\Omega_2)$] $\SD$ and $\SN$ are smooth $(N-1)$ -- dimensional submanifolds of $\partial\Omega$;
\item[$(\Omega_3)$] $\SD$ is a closed manifold with positive $(N-1)$ -- dimensional Lebesgue measure, say $|\SD|=\alpha\in(0,|\partial\Omega|)$;
\item[$(\Omega_4)$] $\SD\cap\SN=\emptyset$, $\SD\cup\SN=\partial\Omega$ and $\SD\cap\overline{\Sigma}_\mathcal{N}=\Gamma$, where $\Gamma$ is a smooth $(N-2)$--dimensional submanifold of $\partial\Omega$.
\end{itemize}

The nonlinearity $f:\Omega\times\R\to\R$ is supposed to be a Carathéodory function satisfying
\begin{itemize}
	\item[$(f_1)$] $|f(x,t)|\leq a_1\left(1+|t|^{q-1}\right)$ for a.e. $x\in\Omega$, for all $t\in\R$, and for some $a_1\in \R$ and $q\in\left(2,\frac{2N}{N-2s}\right)$;
	 \item[$(f_2)$] $\displaystyle F(x,t):=\int_0^t f(x,\tau)d\tau \geq a_2|t|^q-a_3$, for a.e. $x\in\Omega$, for all $t\in\R$ and for some $a_2,a_3\in \R$;
	 \item[$(f_3)$] $\displaystyle\lim_{t\to 0}\frac{f(x,t)}{|t|}=0$, uniformly in $x\in\Omega$;
	 \item[$(f_4)$] $0<qF(x,t)\leq tf(x,t)$ for a.e. $x\in\Omega$ and for all $t\in\R\setminus\{0\}$.
\end{itemize}

As a prototype of nonlinearity fulfilling $(f_1)-(f_4)$, one can choose $f(x,t)=\beta(x)|t|^{q-2}$, where $\beta\in L^\infty(\Omega)$, $\inf_\Omega \beta>0$, and $q$ as above.\par
\smallskip
Our main result reads as follows:
\begin{theorem}\label{mainresult}
Assume $(\Omega_1)-(\Omega_4)$ and $(f_1)-(f_4)$. Then, for every eigenvalue $\Lambda_k$, $k\geq 2$, of the problem
\begin{equation}\label{eigenproblem}
	\left\{
	\begin{array}{rl}
		(-\Delta)^{s} u = \lambda u  & \text{in } \Omega\\
		B(u)=0\mkern+12mu & \text{on } \partial\Omega,
	\end{array}
	\right.
\end{equation}
there exists $\delta_k>0$ such that \eqref{problem} has at least three non-trivial weak solutions for all  $\lambda\in(\Lambda_k-\delta_k,\Lambda_k)$.
\end{theorem}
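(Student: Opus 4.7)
My plan is to apply a $\nabla$-theorem (in the Marino-Saccon / Lupo-Solimini tradition) to the Euler-Lagrange functional $J_\lambda:X\to\R$ associated to \eqref{problem},
\begin{equation*}
J_\lambda(u) \;=\; \tfrac{1}{2}\|u\|_X^2 \;-\; \tfrac{\lambda}{2}\int_\Omega u^2\,dx \;-\; \int_\Omega F(x,u)\,dx,
\end{equation*}
defined on the mixed-boundary fractional Sobolev space $X$ canonically attached to the spectral fractional Laplacian with boundary operator $B$. The subcritical growth $(f_1)$ makes $J_\lambda$ of class $C^1(X,\R)$ and its critical points coincide with weak solutions of \eqref{problem}; combining the Ambrosetti-Rabinowitz hypothesis $(f_4)$ with the compact embedding $X\hookrightarrow L^q(\Omega)$ guaranteed by $(f_1)$ yields boundedness of Palais-Smale sequences, and hence the (PS)-condition at every level.

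I would then use the spectral decomposition of \eqref{eigenproblem}: let $\{\varphi_j\}$ be an $L^2$-orthonormal system of eigenfunctions of $(-\Delta)^s$ under $B$, and set $H_k^-=\operatorname{span}\{\varphi_1,\ldots,\varphi_k\}$, $H_k^+=(H_k^-)^\perp$. Choose $\delta_k>0$ so small that $\Lambda_k-\delta_k>\Lambda_{k-1}$, and consider $\lambda\in(\Lambda_k-\delta_k,\Lambda_k)$. Then the quadratic form $u\mapsto\|u\|_X^2-\lambda|u|_2^2$ is negative-definite on $H_{k-1}^-$, while on $(H_{k-1}^-)^\perp$ one has the bound $\|u\|_X^2-\lambda|u|_2^2\ge(1-\lambda/\Lambda_k)\|u\|_X^2$. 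Coupled with $(f_3)$ this gives $J_\lambda(u)\ge\beta>0$ on a small sphere of $(H_{k-1}^-)^\perp$, whereas $(f_2)$ and $q>2$ force $J_\lambda\to-\infty$ on any finite-dimensional subspace containing $\varphi_k$. A first nontrivial critical point of $J_\lambda$ then arises from Rabinowitz's classical linking theorem.

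Two further critical points are produced by applying the $\nabla$-theorem to the splitting $X=H_k^-\oplus H_k^+$. The heart of the argument is the verification of the $\nabla(J_\lambda,H_k^-,a,b)$ condition on a suitable energy slab $[a,b]$: arguing by contradiction, a sequence $(u_n)$ with $J_\lambda(u_n)\in[a,b]$, $\operatorname{dist}(u_n,H_k^-)\to 0$ and $\|P_{H_k^+}J'_\lambda(u_n)\|\to 0$ would, after linearization and passage to the limit via the compact embedding, produce a nontrivial solution of a linear eigenvalue problem with parameter in $(\Lambda_{k-1},\Lambda_k)$, contradicting the spectral gap secured by the choice of $\delta_k$. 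Together with the strict local-minimum structure of $J_\lambda$ at the origin provided by $(f_3)$, the abstract multiplicity statement then yields two additional critical levels (one above and one below the linking level); all three critical points are nontrivial since $J_\lambda(0)=0$ while each critical level is separated from $0$ by the quantitative energy estimates above.

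I expect the main technical obstacle to be the rigorous verification of the $\nabla$-condition in the nonlocal mixed Dirichlet-Neumann framework. The eigenstructure of $(-\Delta)^s$ with operator $B$ has to be handled via the harmonic extension to the cylinder $\Omega\times(0,\infty)$, and the trace behaviour across the interface $\Gamma$ must be carefully controlled; the restriction $s\in(1/2,1)$ is precisely what renders the Neumann trace on $\SN$ meaningful and the projections $P_{H_k^\pm}$ sufficiently continuous for the $\nabla$-scheme to apply.
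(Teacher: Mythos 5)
Your overall strategy (classical linking for one solution, Marino--Saccon $\nabla$-theorem for two more, Palais--Smale via the Ambrosetti--Rabinowitz condition) is the same as the paper's, but the way you set up the $\nabla$-theorem contains a genuine gap. The Marino--Saccon result requires a \emph{three}-space splitting $X=X_1\oplus X_2\oplus X_3$ in which $X_2$ is the full eigenspace of $\Lambda_k$ (of multiplicity $m$) and the $\nabla$-condition is verified on its complement $X_1\oplus X_3=\mathbb{H}_{k-1}\oplus\mathbb{P}_{k+m-1}$, i.e.\ for points whose component along the resonant eigenspace is small. You instead propose a two-space splitting $H_k^-\oplus H_k^+$ with $H_k^-=\mathrm{span}\{\varphi_1,\dots,\varphi_k\}$ and a condition involving $\mathrm{dist}(u_n,H_k^-)\to0$ together with $\|P_{H_k^+}J_\lambda'(u_n)\|\to0$: this is not the hypothesis of the theorem (the projection and the distance must refer to the same subspace, namely $X_1\oplus X_3$), it ignores the multiplicity of $\Lambda_k$, and the contradiction you describe --- passing to a \emph{linear} eigenvalue problem --- does not occur: the limit point solves the full nonlinear equation restricted to the subspace, and one must use $(f_4)$ to force $\int_\Omega F(x,u_j)\,dx\to0$, hence $u_0=0$, and then contradict the strictly positive energy level (this is the content of the paper's Lemma \ref{uniquecritpoint} and Lemma \ref{Ujbounded}).

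The second missing ingredient is the reason why $\lambda$ must lie in a \emph{left neighborhood} of $\Lambda_k$ rather than anywhere in $(\Lambda_{k-1},\Lambda_k)$. The $\nabla$-condition only holds on a small energy band $(0,\varepsilon_\sigma)$, while the linking theorem needs $b>\sup I_\lambda(\Delta)$ with $\Delta\subset\mathbb{H}_{k+m-1}$; these are compatible only because $\sup_{\mathbb{H}_{k+m-1}}I_\lambda\to0$ as $\lambda\to\Lambda_k^-$ (the paper's Lemma \ref{lemaa}), which is what determines $\delta_k$. Your choice of $\delta_k$ merely enforcing $\Lambda_k-\delta_k>\Lambda_{k-1}$ is insufficient, and without this step the energy bookkeeping that separates the two $\nabla$-theorem solutions (both at levels in $[\varepsilon',\varepsilon'']$, just above $0$) from the Rabinowitz linking solution (at level at least $\beta$, bounded away from $0$) collapses; note also that your claim of a ``strict local-minimum structure at the origin'' is false for $k\geq2$, since the quadratic part of $I_\lambda$ is negative definite on $\mathbb{H}_{k-1}$.
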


The proof of Theorem~\ref{mainresult} relies upon both variational and topological arguments, more precisely, upon a $\nabla$-theorem due to Marino and Saccon contained in \cite{marsac1997some} and recalled here in Theorem~\ref{nablathm}. Although such technical arguments are similar to the ones previously used by several authors (see, for instance, \cite{mu0,marsac1997some, mu1,mu4,mu2} and references therein), in our setting, due to the nonlocal nature of the problem, some additional difficulties naturally appear. For instance, in order to correctly encode the Neumann-Dirichlet boundary conditions in the variational formulation of the problem, a careful analysis of the involved Sobolev spaces is necessary (see Section \ref{functionalsettings}). To the best of our knowledge, no results concerning nonlocal fractional problems with mixed boundary conditions comparable to Theorem~\ref{mainresult} are available in the literature. 

This paper is organized as follows. In Section \ref{functionalsettings} we introduce the variational set-up of problem \eqref{problem}, together 
with some preliminary results. In Section \ref{nablasect} we discuss the compactness and geometric properties of the energy functional $I_\lambda$ associated with \eqref{problem}. In particular, we prove that $I_\lambda$ satisfies the $\nabla$-condition, one of the key points in Theorem~\ref{nablathm}. The last section is devoted to the proof of Theorem \ref{mainresult}.

\section{Functional framework}\label{functionalsettings}
We begin by first recalling the definition of the fractional Laplace operator, based on the spectral decomposition of the classical Laplace operator under mixed boundary conditions. Let $(\lambda_i,\varphi_i)$ be the eigenvalues and the eigenfunctions (normalized with respect to the $L^2(\Omega)$-norm) of $-\Delta$ with homogeneous mixed Dirichlet-Neumann boundary conditions, respectively. Then $(\lambda_i^s,\varphi_i)$ are the eigenvalues and the eigenfunctions of $(-\Delta)^s$. Consequently, given two smooth functions 
$$
u_i(x)=\sum_{j\geq1}\langle u_i,\varphi_j\rangle_{2}\varphi_j(x), \quad i=1,2,
$$
where $\langle u,v\rangle_2=\displaystyle\int_{\Omega}uv\,dx$ is the standard scalar product on $L^2(\Omega)$, one has
\begin{equation}\label{pre_prod}
\langle(-\Delta)^s u_1, u_2\rangle_{2} = \sum_{j\ge 1} \lambda_j^s\langle u_1,\varphi_j\rangle_{2} \langle u_2,\varphi_j\rangle_{2},
\end{equation}
that is, the action of the fractional operator on a smooth function $u_1$ is given by
\begin{equation}\label{action}
(-\Delta)^su_1=\sum_{j\ge 1} \lambda_j^s\langle u_1,\varphi_j\rangle_{2}\varphi_j.
\end{equation}
As a consequence, the {\it spectral} fractional Laplace operator $(-\Delta)^s$ is well defined in the following Hilbert space of functions that vanish on $\SD$:
\begin{equation*}
H_{\Sigma_{\mathcal{D}}}^s(\Omega)\vcentcolon=\left\{u=\sum_{j\ge 1} a_j\varphi_j\in L^2(\Omega):\ u=0\ \text{on }\Sigma_{\mathcal{D}},\ \|u\|_{H_{\Sigma_{\mathcal{D}}^s}(\Omega)}^2:=
\sum_{j\ge 1} a_j^2\lambda_j^s<+\infty\right\}.
\end{equation*}
Thus, given $u\in H_{\SD}^s(\Omega)$, it follows by definition that
\begin{equation}\label{H_norm}
\left\| u\right\|_{H_{\SD}^s(\Omega)}=\left\| (-\Delta)^{s/2} u\right\|_{L^2(\Omega)}.
\end{equation}
Moreover, taking in mind \eqref{pre_prod}, the norm $\left\| \cdot\right\|_{H_{\SD}^s(\Omega)}$ is induced by the scalar product
\begin{equation}\label{sc_prod}
\langle u_1,u_2\rangle_{H_{\Sigma_{\mathcal{D}}}^s}=\left\langle (-\Delta)^su_1,u_2\right\rangle_{2}=\langle (-\Delta)^{\frac{s}{2}}u_1,(-\Delta)^{\frac{s}{2}}u_2\rangle_{2}=\left\langle u_1,(-\Delta)^su_2\right\rangle_{2},
\end{equation}
for all $u_1,u_2\in H_{\SD}^s(\Omega)$. The above chain of equalities can be simply stated as an integration-by-parts like formula.

\begin{remark}\label{rem:range_s}
{\rm As it is proved in \cite[Theorem 11.1]{Lions1972}, if $0<s\le \frac{1}{2}$, then $H_0^s(\Omega)=H^s(\Omega)$ and, thus, also
$H_{\Sigma_{\mathcal{D}}}^s(\Omega)=H^s(\Omega)$, while for $\frac 12<s<1$, $H_0^s(\Omega)\subsetneq H^s(\Omega)$. Therefore, the range $\frac 12<s<1$ ensures $H_{\Sigma_{\mathcal{D}}}^s(\Omega)\subsetneq H^s(\Omega)$ and it provides us with the appropriate functional space for the mixed boundary problem \eqref{problem}.
}
\end{remark}

As commented before, this spectral definition of $(-\Delta)^s$ allows us to integrate by parts in the proper spaces, so that a natural definition of weak solution to \eqref{problem} is the following.

\begin{definition}
{\rm	
We say that $u\in H_{\SD}^s(\Omega)$ is a weak solution to \eqref{problem} if, for all $v\in H_{\SD}^s(\Omega)$,
\begin{equation}\label{weak_solution}
\int_\Omega (-\Delta)^{s/2}u (-\Delta)^{s/2} v\, dx =\lambda\int_\Omega uv \; dx + \int_\Omega f(x,u)v\, dx,
\end{equation}
that is,
\begin{equation*}
	\langle u,v \rangle_{H_{\Sigma_{\mathcal{D}}}^s}=\langle \lambda u +f(\cdot,u), v\rangle_{L^2(\Omega)}.
\end{equation*}
}
\end{definition}
We observe that \eqref{weak_solution} is well-defined because of the embedding $H_{\Sigma_{\mathcal{D}}}^s(\Omega)\hookrightarrow L^{2_s^*}(\Omega)$, so given $u\in H_{\Sigma_{\mathcal{D}}}^s(\Omega)$, by $(f_2)$ one has $\lambda u+f(x,u)\in L^{\frac{2N}{N+2s}}\hookrightarrow \left(H_{\Sigma_{\mathcal{D}}}^s(\Omega)\right)'$.\\

\noindent The energy functional $I_\lambda:H_{\Sigma_{\mathcal{D}}}^s(\Omega)\to\R$ associated with \eqref{problem} is given by
\begin{equation}\label{functional_down}
I_\lambda(u):=\frac12\int_{\Omega}|(-\Delta)^{\frac{s}{2}}u|^{2}dx-\frac{\lambda}{2}\int_{\Omega}u^2dx-\int_{\Omega}F(x,u)dx.
\end{equation}
Indeed, as one can promptly see,
\begin{equation*}
\langle I_\lambda'(u), v\rangle_{H^{-s}}  = \int_{\Omega}(-\Delta)^{\frac{s}{2}}u(-\Delta)^{\frac{s}{2}}v\,dx-\lambda\int_{\Omega}uv\,dx  - \int_{\Omega}f(x,u)v\,dx,
\end{equation*}
for all $u,v\in H_{\Sigma_{\mathcal{D}}}^s(\Omega)$, where $\langle\cdot,\cdot\rangle_{H^{-s}}$ denotes the duality between $H_{\Sigma_{\mathcal{D}}}^s(\Omega)$ and the dual space $H^{-s}(\Omega)\vcentcolon=\left(H_{\Sigma_{\mathcal{D}}}^s(\Omega)\right)'$.

In addition to the definition given above, there is another equivalent characterization of the fractional Laplacian in terms of the so-called Dirichlet-to-Neumann operator. Indeed, the (functional) square root of the Laplacian acting on a function $u$ in the whole space $\mathbb{R}^{N}$, can be computed as the normal derivative on the boundary of its harmonic extension to the upper half-space $\mathbb{R}_+^{N+1}$. Based on this idea, Caffarelli and Silvestre (cf. \cite{cafsil2007an}) proved that $(-\Delta)^s$ can be realized in a local way by using one more variable and the so-called {\it $s$-harmonic extension}. This was later extended to bounded domains in \cite{bracoldepsan2013a,Cabre2010,Capella2011}. We illustrate next such local characterization for the sake of completeness, although we point out that we will completely work on the nonlocal setting avoiding the use of the local realization provided by the $s$-harmonic extension.

Consider the cylinder $\mathcal C_\Omega:=\Omega\times(0,+\infty)\subset\R_+^{N+1}$ and denote by $\partial_L\mathcal{C}_\Omega:=\partial\Omega\times[0,+\infty)$ its lateral boundary. Set also $\SD^*:=\SD\times[0,+\infty)$,  $\SN^*:=\SN\times[0,+\infty)$ and $\Gamma^*:=\Gamma\times[0,+\infty)$. Note that, by construction, $\SD^*\cap\SN^*=\emptyset$, $\SD^*\cup\SN^*=\partial_L\mathcal C_\Omega$ and $\SD^*\cap\overline{\SN^*}=\Gamma^*$.

Given $u\in H_{\SD}^s(\Omega)$, we define its $s$-harmonic extension $w(x,y)=E_s[u(x)]$ as the solution to the problem
\begin{equation*}
        \left\{
        \begin{array}{rlcl}\label{extprobl}
           -\text{div}\left(y^{1-2s}\nabla w(x,y) \right)&\!\!\!\!=0  & & \mbox{ in } \mathcal{C}_{\Omega} , \\
          B(w(x,y))&\!\!\!\!=0   & & \mbox{ on } \partial_L\mathcal{C}_{\Omega} , \\
          w(x,0)&\!\!\!\!=u(x)  & &  \mbox{ on } \Omega\times\{y=0\} ,
        \end{array}
        \right.
\end{equation*}
where $\displaystyle B(w)=w\chi_{\SD^*} +\frac{\partial w}{\partial\nu}\chi_{\SN^*}$, being $\nu$, by a slight abuse of notation, the outward unit normal to $\partial_L\mathcal C_\Omega$. The extension function $w$ belongs to the space
$$
 \mathcal{X}_{\Sigma_{\mathcal{D}}^*}^s(\mathcal{C}_{\Omega})\vcentcolon=\overline{C_0^\infty(\Omega\cup\SN)\times[0,+\infty)}^{\left\| \cdot\right\|_{\mathcal{X}_{\Sigma_{\mathcal{D}}^*}^s(\mathcal{C}_{\Omega})}},
$$
where we define
\begin{equation}\label{norma}
\left\| \cdot \right\|_{\mathcal{X}_{\Sigma_{\mathcal{D}}^*}^s(\mathcal{C}_{\Omega})}\vcentcolon=k_s\int_{\mathcal C_\Omega} y^{1-2s}|\nabla (\cdot)|^2dxdy,
\end{equation}
$k_s:=2^{2s-1}\frac{\Gamma(s)}{\Gamma(1-s)}$. The space $\mathcal{X}_{\Sigma_{\mathcal{D}}^*}^s(\mathcal{C}_{\Omega})$ is a Hilbert space equipped with the
norm $\left\| \cdot \right\|_{\mathcal{X}_{\Sigma_{\mathcal{D}}^*}^s(\mathcal{C}_{\Omega})}$, which is induced by the scalar product
$$
\left\langle w,z\right\rangle_{\mathcal{X}_{\Sigma_{\mathcal{D}}^*}^s(\mathcal{C}_{\Omega})}\vcentcolon=k_s \int_{\mathcal C_\Omega}y^{1-2s}\left\langle \nabla w,\nabla z\right\rangle dx dy,
$$
$w,z\in \mathcal{X}_{\Sigma_{\mathcal{D}}^*}^s(\mathcal{C}_{\Omega})$.
Moreover the following inclusions hold
\begin{equation} \label{inclusions}
\mathcal{X}_0^s(\mathcal{C}_{\Omega}) \subsetneq \mathcal{X}_{\Sigma_{\mathcal{D}}^*}^s(\mathcal{C}_{\Omega}) \subsetneq \mathcal{X}^s(\mathcal{C}_{\Omega}),
\end{equation}
being  $\mathcal{X}_0^s(\mathcal{C}_{\Omega})$ the space of functions that belong to $\mathcal{X}^s(\mathcal{C}_{\Omega})\equiv H^1(\mathcal{C}_{\Omega},y^{1-2s}dxdy)$ and vanish on $\partial_L\mathcal{C}_{\Omega}$. The key point of the extension technique is the following localization formula for the fractional Laplacian  (cf. \cite{bracoldepsan2013a,cafsil2007an}),
$$
\frac{\partial w}{\partial \nu^s}=-k_s\lim_{y\to 0^+}y^{1-2s}\frac{\partial w}{\partial y}=(-\Delta)^s u(x),
$$
thanks to which \eqref{problem} can be restated as
\begin{equation}\label{extorigprobl}        \tag{$P_{\lambda}^*$}
 \left\{
        \begin{array}{rlcl}
           -\text{div}(y^{1-2s}\nabla w )&\!\!\!\!=0  & & \mbox{ in } \mathcal{C}_{\Omega} , \\
          B(w)&\!\!\!\!=0   & & \mbox{ on } \partial_L\mathcal{C}_{\Omega} , \\
         \displaystyle\frac{\partial w}{\partial \nu^s}&\!\!\!\!=\lambda w(x,0)+f(x,w(x,0)) & &  \mbox{ on } \Omega\times\{y=0\}.
        \end{array}
        \right.
\end{equation}

\begin{definition}
{\rm
We say that $w\in \mathcal{X}_{\Sigma_{\mathcal{D}}^*}^s(\mathcal{C}_{\Omega})$ is a weak solution to \eqref{extorigprobl} if, for all $z\in \mathcal{X}_{\Sigma_{\mathcal{D}}^*}^s(\mathcal{C}_{\Omega})$,
\begin{equation*}
	k_s\int_{\mathcal C_\Omega} y^{1-2s} \nabla w\cdot \nabla z dxdy =\lambda\int_\Omega w(x,0)z(x,0)dx + \int_\Omega f(x,w(x,0))z(x,0)dx.
\end{equation*}
}
\end{definition}

In the light of the above discussion, if  $w\in \mathcal{X}_{\Sigma_{\mathcal{D}}^*}^s(\mathcal{C}_{\Omega})$ weakly solves \eqref{extorigprobl}, then its trace
$u(x)=\text{Tr}[w](x):=w(x,0)$ belongs to
$H_{\Sigma_{\mathcal{D}}}^s(\Omega)$ and is a weak solution to
problem \eqref{problem}. Vice versa, if $u\in
H_{\Sigma_{\mathcal{D}}}^s(\Omega)$ is a solution to \eqref{problem}, then $w=E_s[u]\in
\mathcal{X}_{\Sigma_{\mathcal{D}}^*}^s(\mathcal{C}_{\Omega})$ satisfies \eqref{extorigprobl}. Thus, both formulations are equivalent and the {\it extension operator}
$$
E_s: H_{\Sigma_{\mathcal{D}}}^s(\Omega) \to \mathcal{X}_{\Sigma_{\mathcal{D}}^*}^s(\mathcal{C}_{\Omega}),
$$
allows us to switch between each other. Moreover, due to the choice of the constant $k_s$, the extension operator $E_s$ is an isometry (cf. \cite{bracoldepsan2013a, cafsil2007an}),
\begin{equation}\label{norma2}
\|E_s[u] \|_{\mathcal{X}_{\Sigma_{\mathcal{D}}^*}^s(\mathcal{C}_{\Omega})}=
\|u\|_{H_{\Sigma_{\mathcal{D}}}^s(\Omega)}, \quad \text{for all } u\in H_{\Sigma_{\mathcal{D}}}^s(\Omega).
\end{equation}
Finally, the energy functional corresponding to $(P_{\lambda}^*)$ is defined by
\begin{equation*}
J_\lambda(w)=\frac{\kappa_s}{2}\int_{\mathcal{C}_{\Omega}}y^{1-2s}|\nabla w|^2dxdy-\frac{\lambda}{2}\int_{\Omega}|w(x,0)|^2dx-\int_{\Omega}F(x,w(x,0))dx,
\end{equation*}
and, plainly, critical points of $J_\lambda$ in
$\mathcal{X}_{\Sigma_{\mathcal{D}}^*}^s(\mathcal{C}_{\Omega})$
correspond to critical points of $I_\lambda$ in
$H_{\Sigma_{\mathcal{D}}}^s(\Omega)$. Moreover, minima of $J_\lambda$ are also minima of $I_\lambda$ (the proof of this fact is similar to the one of the pure Dirichlet case, see \cite[Proposition 3.1]{Barrios2012}).

When one considers Dirichlet boundary conditions the following \textit{trace inequality} holds (cf. \cite[Theorem 4.4]{bracoldepsan2013a}): there exists $C=C(N,s,r,|\Omega|)>0$ such that, for all $w\in\mathcal{X}_0^s(\mathcal{C}_{\Omega})$,
\begin{equation}\label{sobext}
 C\left(\int_{\Omega}|w(x,0)|^r dx\right)^{\frac{2}{r}}\leq k_s\int_{\mathcal{C}_{\Omega}}y^{1-2s}|\nabla w(x,y)|^2dxdy,
\end{equation}
for $r\in[1,2_s^*]$, $2_s^*:=\frac{2N}{N-2s}$, and $N>2s$. Because of \eqref{norma2}, inequality \eqref{sobext} is equivalent to the fractional Sobolev inequality: for all $u\in H_{0}^s(\Omega)$ and $r\in[1,2^*_s]$, $N>2s$, one has
\begin{equation}\label{sobolev}
  C\left(\int_{\Omega}|u|^rdx\right)^{\frac{2}{r}}\leq \int_{\Omega}\left|(-\Delta)^{\frac{s}2}u\right|^2dx.
\end{equation}
If $r=2_s^*$ the best constant in \eqref{sobolev} (and, by \eqref{norma2}, also in \eqref{sobext}), 
denoted by $S(N,s)$, is independent of $\Omega$ and its exact value is given by
\begin{equation*}
S(N,s)=2^{2s}\pi^s\frac{\Gamma\left(\frac{N+2s}{2}\right)}{\Gamma\left(\frac{N-2s}{2}\right)}\left(\frac{\Gamma(\frac{N}{2})}{\Gamma(N)}\right)^{\frac{2s}{N}}.
\end{equation*}
Since it is not achieved in any bounded domain, one has
\begin{equation*}
S(N,s)\left(\int_{\mathbb{R}^N}|w(x,0)|^{\frac{2N}{N-2s}}dx\right)^{\frac{N-2s}{N}}\leq k_s\int_{\mathbb{R}_{+}^{N+1}}y^{1-2s}|\nabla w(x,y)|^2dxdy,
\end{equation*}
for all $w\in \mathcal{X}^s(\mathbb{R}_{+}^{N+1})$ where $\mathcal{X}^s(\mathbb{R}_{+}^{N+1})\vcentcolon=\overline{\mathcal{C}^{\infty}(\mathbb{R}^N\times[0,\infty))}^{\|\cdot\|_{\mathcal{X}^s(\mathbb{R}_{+}^{N+1})}}$, with $\|\cdot\|_{\mathcal{X}^s(\mathbb{R}_{+}^{N+1})}$ defined as
\eqref{norma} replacing $\mathcal{C}_\Omega$ by $\mathbb{R}_{+}^{N+1}$. In the whole space, the latter inequality turns into equality for the family $w_{\varepsilon}= E_s[u_{\varepsilon}]$,
\begin{equation}\label{eq:sob_extremal}
u_{\varepsilon}(x)=\frac{\varepsilon^{\frac{N-2s}{2}}}{(\varepsilon^2+|x|^2)^{\frac{N-2s}{2}}},
\end{equation}
with arbitrary $\varepsilon>0$, (cf. \cite{bracoldepsan2013a}).

When mixed boundary conditions are considered the situation is quite similar since the Dirichlet condition is imposed on a set $\Sigma_{\mathcal{D}} \subset \partial \Omega$ such that $0<\alpha<|\partial\Omega|$.

\begin{definition}\label{defi_sob_const}
{\rm
The Sobolev constant relative to the Dirichlet boundary $\Sigma_{\mathcal{D}}$ is defined by
\begin{equation*}
\widetilde{S}(\Sigma_{\mathcal{D}})=\inf_{\substack{u\in
H_{\Sigma_{\mathcal{D}}}^s(\Omega)\\ u\not\equiv
0}}\frac{\|u\|_{H_{\Sigma_{\mathcal{D}}}^s(\Omega)}^2}{\|u\|_{L^{2_s^*}(\Omega)}^2}=\inf_{\substack{w\in \mathcal{X}_{\Sigma_{\mathcal{D}}^*}^s(\mathcal{C}_{\Omega})\\
w\not\equiv
0}}\frac{\|w\|_{\mathcal{X}_{\Sigma_{\mathcal{D}}}^s(\mathcal{C}_{\Omega})}^2}{\|w(\cdot,0)\|_{L^{2_s^*}(\Omega)}^2}.
\end{equation*}
}
\end{definition}

Since $0<\alpha<|\partial\Omega|$, by the inclusions \eqref{inclusions}, one has
\begin{equation}\label{const}
0<\widetilde{S}(\Sigma_{\mathcal{D}})<S(N,s).
\end{equation}
Actually, by \cite[Proposition 3.6]{colort2019the} it turns out that $\widetilde{S}(\Sigma_{\mathcal{D}})\leq 2^{-\frac{2s}{N}}S(N,s)$ and, by \cite[Theorem 2.9]{colort2019the}, if $\widetilde{S}(\Sigma_{\mathcal{D}})<2^{-\frac{2s}{N}}S(N,s)$, then $\widetilde{S}(\Sigma_{\mathcal{D}})$ is attained.

\begin{remark}\label{remark_att}
{\rm Regarding the attainability of the constant $\widetilde{S}(\Sigma_{\mathcal{D}})$ it is worth to notice the following. Due to the spectral definition of $(-\Delta)^s$ we have $\widetilde{S}(\Sigma_{\mathcal{D}})\leq|\Omega|^{\frac{2s}{N}}\lambda_1^s(\alpha)$, where $\lambda_1(\alpha)$ is the first eigenvalue of $(-\Delta)$ endowed with mixed boundary conditions on the sets $\Sigma_{\mathcal{D}}=\Sigma_{\mathcal{D}}(\alpha)$ and $\Sigma_{\mathcal{N}}= \Sigma_{\mathcal{N}}(\alpha)$. Since $\lambda_1(\alpha)\to0$ as $\alpha\to0^+$, (cf. \cite[Lemma 4.3]{colper2003semilinear}), we have $\widetilde{S}(\Sigma_{\mathcal{D}})\to0$ as $\alpha\to0^+$. Thus, taking the Dirichlet part small enough, the constant $\widetilde{S}(\Sigma_{\mathcal{D}})$ is attained independently of the geometry of $\Omega$. This is in contrast to the Dirichlet case as, by a Pohozaev-type identity, it is well known that the Sobolev constant $S(N,s)$ is not attained if the domain $\Omega$ is starshaped.
}
\end{remark}

Because of \eqref{norma2} and \eqref{const}, it follows that, for all $w\in \mathcal{X}_{\Sigma_{\mathcal{D}}^*}^s(\mathcal{C}_{\Omega})$,
\begin{equation*}
\widetilde{S}(\Sigma_{\mathcal{D}})\left(\int_\Omega |w(x,0)|^{2^*_s} dx\right)^{\frac{2}{2^*_s}}\leq\|w(x,0)\|_{H_{\Sigma_{\mathcal{D}}}^s(\Omega)}^2=\|E_s[w(x,0)]\|_{\mathcal{X}_{\Sigma_{\mathcal{D}}}^s(\mathcal{C}_{\Omega})}^2.
\end{equation*}
This Sobolev-type inequality provides us with a trace inequality in the mixed boundary data framework.

\begin{lemma}\cite[Lemma 2.4]{colort2019the}\label{lem:traceineq}
For all $w\in \mathcal{X}_{\Sigma_{\mathcal{D}}^*}^s(\mathcal{C}_{\Omega})$, one has
\begin{equation*}
\widetilde{S}(\Sigma_{\mathcal{D}})\left(\int_\Omega|w(x,0)|^{2^*_s}  dx\right)^{\frac{2}{2^*_s}}\leq k_s\int_{\mathcal{C}_{\Omega}} y^{1-2s} |\nabla w(x,y)|^2 dxdy.
\end{equation*}
In an equivalent way, for all $u\in H_{\SD}^s(\Omega)$ it holds
\begin{equation}\label{mix_sobolev}
	\widetilde{S}(\Sigma_{\mathcal{D}})\left(\int_{\Omega}|u|^{2^*_s}dx\right)^{\frac{2}{2^*_s}}\leq \int_{\Omega}|(-\Delta)^{\frac{s}2}u|^2dx.
\end{equation}
\end{lemma}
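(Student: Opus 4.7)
The plan is to reduce both forms of the trace inequality directly to the two characterizations of $\widetilde{S}(\Sigma_{\mathcal{D}})$ recorded in Definition \ref{defi_sob_const}. For the extension version, given any non-trivial $w \in \mathcal{X}_{\Sigma_{\mathcal{D}}^*}^s(\mathcal{C}_{\Omega})$, the second characterization of $\widetilde{S}(\Sigma_{\mathcal{D}})$ as an infimum of Rayleigh-type quotients immediately yields
$$
\widetilde{S}(\Sigma_{\mathcal{D}})\,\|w(\cdot,0)\|_{L^{2_s^*}(\Omega)}^2 \;\leq\; \|w\|^2_{\mathcal{X}_{\Sigma_{\mathcal{D}}^*}^s(\mathcal{C}_{\Omega})} \;=\; k_s\int_{\mathcal{C}_\Omega} y^{1-2s}|\nabla w|^2\,dxdy,
$$
by the definition \eqref{norma} of the norm; when $w\equiv 0$ both sides vanish and the inequality is trivial.

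For the equivalent nonlocal form \eqref{mix_sobolev}, I would specialize the previous step to $w = E_s[u]$ with $u \in H_{\Sigma_{\mathcal{D}}}^s(\Omega)$. Since $E_s[u](\cdot,0) = u$ and, combining the isometry \eqref{norma2} with \eqref{H_norm},
$$
\|E_s[u]\|^2_{\mathcal{X}_{\Sigma_{\mathcal{D}}^*}^s(\mathcal{C}_{\Omega})} \;=\; \|u\|^2_{H_{\Sigma_{\mathcal{D}}}^s(\Omega)} \;=\; \int_\Omega \bigl|(-\Delta)^{s/2} u\bigr|^2\,dx,
$$
the extension inequality collapses exactly to the claimed \eqref{mix_sobolev}.

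The only genuine content behind the statement, and what I expect to be the main (though standard) technical point, is the equivalence of the two infima defining $\widetilde{S}(\Sigma_{\mathcal{D}})$ in Definition \ref{defi_sob_const}. This rests on the Dirichlet-principle property of the $s$-harmonic extension $E_s$: among all $w \in \mathcal{X}_{\Sigma_{\mathcal{D}}^*}^s(\mathcal{C}_{\Omega})$ sharing a prescribed trace $u \in H_{\Sigma_{\mathcal{D}}}^s(\Omega)$, the minimum of $\|w\|_{\mathcal{X}_{\Sigma_{\mathcal{D}}^*}^s(\mathcal{C}_{\Omega})}$ is attained precisely by $w = E_s[u]$. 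Hence on the one hand $\widetilde{S}_{\text{extension}} \leq \widetilde{S}_{\text{nonlocal}}$ by picking $w = E_s[u]$ as an admissible competitor, and on the other hand, for every $w$ one has $\|E_s[w(\cdot,0)]\|_{\mathcal{X}} \leq \|w\|_{\mathcal{X}}$, which together with \eqref{norma2} gives the reverse inequality. Once this equivalence is in hand, the lemma follows immediately from the two reductions above.
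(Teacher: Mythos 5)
Your argument is correct and coincides with what the paper does: the lemma is stated there as a citation to \cite[Lemma 2.4]{colort2019the}, and the display immediately preceding it performs exactly your reduction of both inequalities to Definition \ref{defi_sob_const} via the isometry \eqref{norma2}. Your further observation that the genuine content is the equality of the two infima in Definition \ref{defi_sob_const}, which rests on the energy-minimizing property of $E_s$ among extensions with a fixed trace, correctly identifies the point settled in the cited reference.
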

In what follows, we shall use the same symbols $c,C$, or some subscripted version of them, to denote possibly different positive constants. Moreover, if $A\subseteq H_{\SD}^s(\Omega)$ and $r>0$, we shall indicate by $B_r(A)$ and $\partial B_r(A)$ the sets
\begin{equation*}
B_r(A):=\{u\in A: \left\|u\right\|_{H_{\SD}^s(\Omega)}\leq r\}\qquad\text{and}\qquad \partial B_r(A):=\{u\in A: \left\|u\right\|_{H_{\SD}^s(\Omega)}= r\}
\end{equation*}
respectively.
\section{Linking geometry, $\nabla$- and $(PS)$-conditions}\label{nablasect}
The main tool we rely upon for the proof of Theorem \ref{mainresult} is  a "mixed-type" theorem (known also as a $\nabla$-theorem) due to Marino and Saccon, in which properties of both a functional and its gradient are used (\cite[Theorem 2.10]{marsac1997some}). Basically, a linking structure together with a suitable condition of the gradient on certain subsets permit to deduce the existence of multiple critical points. Before stating this theorem, let us fix some notation and give some definition.

If $X$ be a Hilbert space and $M$ a closed subspace of $X$, we denote by $\Pi_M:X\to M$ the orthogonal projection of $X$ on $M$ and by $d(u,M)\vcentcolon=\inf_{v\in M}d(u,v)$ the distance of $u\in X$ from $M$. In addition, let $I:X\to\R$ be a $C^1$ functional and $a,b\in\R\cup\{-\infty,+\infty\}$. With these ingredients at hand, we give the following

\begin{definition}
{\rm	
We say that $(I,M,a,b)$ satisfy the $\nabla$-condition, abbreviated by $(\nabla)(I,M,a,b)$, if there exists $\gamma>0$ such that
$$
\inf\Big\{\| \Pi_M\nabla I(u)\|_{X}: u\in M, \; a\leq I(u)\leq b, \;d(u,M)\leq\gamma \Big\} >0.
$$
}
\end{definition}

\noindent We also recall that $I$ is said to satisfy the Palais-Smale condition at level $c\in\R$, $(PS)_c$ for short, if any sequence $\{u_j\}\subset X$ satisfying
$$
I(u_j)\to c, \quad I'(u_j)\to 0 \text{ in } X^*, \quad \text{as } j\to\infty,
$$
has a convergent subsequence.

\begin{theorem}\label{nablathm}
Let $X$ be a Hilbert space and let $X_i$, $i=1,2,3$, be three subspaces of $X$ such that $X=X_1\oplus X_2 \oplus X_3$, with $dim \ X_i <\infty$ for $i=1,2$. Denote by $\Pi_i:X\to X_i$ the orthogonal projection of $X$ on $X_i$. Let $I:X\to\R$ be of class $C^1$, $\varrho,\varrho',\varrho'',\varrho_1$ be such that $\varrho_1>0$, $0\leq\varrho'<\varrho<\varrho''$, and define
\begin{align*}
\Delta&\vcentcolon=\left\lbrace u\in X_1\oplus X_2: \varrho'\leq \left\|\Pi_2 u \right\|\leq\varrho'', \left\| \Pi_1 u\right\|\leq\varrho_1\right\rbrace,\\
T&\vcentcolon=\partial_{X_1\oplus X_2}\Delta.
\end{align*}
Suppose that
\begin{equation}
a'\vcentcolon=\sup I(T) <\inf I(\partial B_\varrho(X_2\oplus X_3))=\vcentcolon a''.
\end{equation}
Let $a,b\in\R$ be such that $a'<a<a''$ and $b>\sup I(\Delta)$, and assume that $(\nabla)(I,X_1\oplus X_3,a,b)$ and $(PS)_c$ hold, the latter for all $c\in[a,b]$.

Then, $I$ has at least two critical points in $I^{-1}([a,b])$. Moreover, if in addition we have $\inf I(B_\varrho(X_2\oplus X_3))>-\infty$ 
and $(PS)_c$ holds for all $c\in[a_1,b]$, with $a_1<\inf I(B_\varrho(X_2\oplus X_3))$ 
then $I$ has another critical level in $[a_1,a']$.
\end{theorem}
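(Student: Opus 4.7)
The statement is the Marino--Saccon $\nabla$-theorem, and the plan is to reproduce its proof by running three distinct min-max characterizations leading to three mutually separated critical levels. The first two would exploit a linking between the ``box'' $\Delta$ (with boundary $T$) in $X_1\oplus X_2$ and the sphere $\partial B_\varrho(X_2\oplus X_3)$ in $X_2\oplus X_3$, while the third would come, under the additional hypothesis, from a direct local minimization on $B_\varrho(X_2\oplus X_3)$. The role of the $\nabla$-condition is to separate the first two critical levels by providing a quantitative gradient bound that enables a constrained deformation.

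\textbf{First critical level via standard linking.} Since $\dim(X_1\oplus X_2)<\infty$ and $0\leq\varrho'<\varrho<\varrho''$, a finite-dimensional degree argument shows that the pair $(\Delta,T)$ links $\partial B_\varrho(X_2\oplus X_3)$: every $\eta\in C(\Delta,X)$ with $\eta|_T=\mathrm{id}$ satisfies $\eta(\Delta)\cap\partial B_\varrho(X_2\oplus X_3)\neq\emptyset$. I would then set
\[
c_1:=\inf_{\eta\in\Gamma}\sup_{u\in\Delta}I(\eta(u)),\qquad \Gamma:=\{\eta\in C(\Delta,X):\eta|_T=\mathrm{id}\},
\]
so that the linking inequality $a'<a''$ forces $a''\leq c_1\leq\sup I(\Delta)\leq b$; by $(PS)_{c_1}$ together with the classical deformation lemma, this would yield a first critical point at level $c_1$.

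\textbf{Second critical level via $(\nabla)$, plus the third via minimization.} For the second critical point I would work with a ``dual'' min-max characterization along the lines of
\[
c_2:=\sup_{\eta\in\Gamma_*}\inf_{u\in\partial B_\varrho(X_2\oplus X_3)}I(\eta(u)),
\]
over a class $\Gamma_*$ of homeomorphisms of $X$ preserving the linking with $\Delta$ and acting only within the tube $\{u:d(u,X_1\oplus X_3)\leq\gamma\}$. The $\nabla$-condition would supply a constrained pseudo-gradient of the form $-\Pi_{X_1\oplus X_3}\nabla I/\|\Pi_{X_1\oplus X_3}\nabla I\|$, well-defined and bounded from below on the level strip $[a,b]$ near $X_1\oplus X_3$; this would generate the deformation needed to realize $c_2\in[a,a'']$ as a critical level distinct from $c_1$. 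Finally, under the extra hypothesis, the value $c_0:=\inf I(B_\varrho(X_2\oplus X_3))$ would belong to $[a_1,a']$, and since $a''>a'\geq c_0$ it cannot be attained on the boundary sphere; Ekeland's principle combined with $(PS)_{c_0}$ would then furnish the third critical point.

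\textbf{Main obstacle.} The delicate point is the construction realizing $c_2$: one must build a homotopy that simultaneously (i) preserves the topological linking with $\Delta$, (ii) confines its action to the tube where $(\nabla)(I,X_1\oplus X_3,a,b)$ is quantitative, and (iii) strictly decreases $I$ along pseudo-gradient lines. This is exactly what makes the $\nabla$-condition --- strictly stronger than plain $I'\neq 0$ --- essential: without a uniform positive lower bound on $\|\Pi_{X_1\oplus X_3}\nabla I\|$, the constrained pseudo-gradient could fail to exist and the second min-max would degenerate into the first.
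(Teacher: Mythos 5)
The first thing to note is that the paper does not prove this statement at all: Theorem \ref{nablathm} is quoted verbatim from Marino and Saccon \cite[Theorem 2.10]{marsac1997some} and is invoked in Section \ref{proofofmainresult} as a black box. So there is no internal proof to compare yours against; your attempt has to be judged as a reconstruction of the Marino--Saccon argument, and as such it contains two genuine gaps. The first critical level is fine in substance: the intersection property of $(\Delta,T)$ with $\partial B_\varrho(X_2\oplus X_3)$ does follow from a degree argument in the finite-dimensional space $X_1\oplus X_2$ (after projection), and the standard deformation lemma plus $(PS)_{c_1}$ gives a critical point with $a''\leq c_1<b$.

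The problems begin exactly where you place your ``main obstacle.'' The construction of the constrained deformation inside the tube $\{u:d(u,X_1\oplus X_3)\leq\gamma\}$ that yields a \emph{second} critical point is precisely the content of the theorem beyond the classical linking theorem, and your sketch does not supply it: the class $\Gamma_*$ is not specified precisely enough to check that it is nonempty and stable under the flow, that the value $c_2$ is critical, or that $c_2\neq c_1$ (both could equal $a''$, in which case one needs a multiplicity argument at a single level --- which is in fact how Marino and Saccon proceed: assuming the strip $I^{-1}([a,b])$ contains at most one critical point, the $\nabla$-condition permits a deformation of the sublevel set $\{I\leq b\}$ into $\{I\leq a\}$ compatible with the splitting, contradicting the linking). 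Naming the difficulty is not the same as resolving it. The second gap is in your third critical point: minimizing $I$ over $B_\varrho(X_2\oplus X_3)$ via Ekeland's principle only produces a sequence with $\Pi_{X_2\oplus X_3}\nabla I(u_j)\to 0$, i.e.\ an almost-critical sequence for the \emph{restriction} $I|_{X_2\oplus X_3}$; nothing forces the $X_1$-component of the gradient to vanish in the limit, so $(PS)_c$ for $I$ does not apply and the limit need not be a critical point of $I$ on $X$. The correct argument for the level in $[a_1,a']$ again runs through a min-max/deformation in the whole space (which is also why the hypothesis requires $(PS)_c$ for every $c\in[a_1,b]$ rather than only at the single level $\inf I(B_\varrho(X_2\oplus X_3))$).
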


In order to use Theorem \ref{nablathm} to prove Theorem \ref{mainresult} we start by checking that the energy functional $I_\lambda$ defined in \eqref{functional_down} has the linking geometry stated in Theorem \ref{nablathm}.

Let $\{\Lambda_k\}$ be the eigenvalues of $(-\Delta)^s$ in $\Omega$ with mixed Dirichlet-Neumann conditions, in increasing order and counted with their multiplicity,
$$
0<\Lambda_1<\Lambda_2\leq\ldots\Lambda_k\leq\Lambda_{k+1}\leq\ldots, \quad \Lambda_k\to +\infty \text{ as } +\to\infty,
$$
where, by definition, $\Lambda_k=\lambda_k^s$, $k\in\N$. For every $k\geq 1$, set
$$
\mathbb{H}_k\vcentcolon=\text{span}\{\varphi_1,\ldots,\varphi_k\}
$$
and
$$
\mathbb{P}_k\vcentcolon=\{u\in H_{\SD}^s(\Omega): \left\langle u,\varphi_j\right\rangle_{H_{\SD}^s}=0, \text{ for all } j=1,\ldots,k\},
$$
where $\varphi_k$ is the eigenfunction corresponding to $\Lambda_k$. In this setting, we have the following minimax characterization of the eigenvalues.
\begin{lemma}\label{varcharacteigen}
For any $k\in\mathbb{N}$, one has
$$
\Lambda_k=\inf_{u\in \mathbb{P}_{k-1}}\frac{\left\|u\right\|_{H_{\SD}^s(\Omega)}^2}{\left\| u\right\|_{L^2(\Omega)}^2}=\sup_{u\in \mathbb{H}_{k}}\frac{\left\|u\right\|_{H_{\SD}^s(\Omega)}^2}{\left\| u\right\|_{L^2(\Omega)}^2}.
$$
\end{lemma}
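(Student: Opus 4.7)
The plan is to exploit the spectral structure introduced at the beginning of Section~\ref{functionalsettings}. The family $\{\varphi_j\}_{j\geq 1}$ is a complete orthonormal basis of $L^2(\Omega)$ and, by construction of the norm, also an orthogonal system in $H_{\SD}^s(\Omega)$ with $\|\varphi_j\|_{H_{\SD}^s(\Omega)}^2=\lambda_j^s=\Lambda_j$. Thus any $u\in H_{\SD}^s(\Omega)$ admits the expansion $u=\sum_{j\geq 1}a_j\varphi_j$ with $a_j=\langle u,\varphi_j\rangle_2$, and Parseval gives
\[
\|u\|_{L^2(\Omega)}^2=\sum_{j\geq 1}a_j^2,\qquad \|u\|_{H_{\SD}^s(\Omega)}^2=\sum_{j\geq 1}a_j^2\Lambda_j,
\]
the second identity being the very definition of the norm on $H_{\SD}^s(\Omega)$.

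Next, using \eqref{sc_prod} together with \eqref{action}, I would observe that for each $j$
\[
\langle u,\varphi_j\rangle_{H_{\SD}^s}=\langle(-\Delta)^su,\varphi_j\rangle_2=\Lambda_j\langle u,\varphi_j\rangle_2=\Lambda_j a_j.
\]
Since $\Lambda_j>0$ for all $j$, the condition $u\in\mathbb{P}_{k-1}$ is therefore equivalent to $a_j=0$ for $j=1,\ldots,k-1$. Hence a generic $u\in\mathbb{P}_{k-1}\setminus\{0\}$ has Rayleigh quotient
\[
\frac{\|u\|_{H_{\SD}^s(\Omega)}^2}{\|u\|_{L^2(\Omega)}^2}=\frac{\sum_{j\geq k}a_j^2\Lambda_j}{\sum_{j\geq k}a_j^2}\geq \Lambda_k,
\]
by monotonicity of $\{\Lambda_j\}$, and the infimum $\Lambda_k$ is attained by the test function $u=\varphi_k\in\mathbb{P}_{k-1}$. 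This establishes the first equality.

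For the second equality, I would restrict to $u\in\mathbb{H}_k\setminus\{0\}$, where $u=\sum_{j=1}^{k}a_j\varphi_j$, so that
\[
\frac{\|u\|_{H_{\SD}^s(\Omega)}^2}{\|u\|_{L^2(\Omega)}^2}=\frac{\sum_{j=1}^{k}a_j^2\Lambda_j}{\sum_{j=1}^{k}a_j^2}\leq \Lambda_k,
\]
again using monotonicity of the $\Lambda_j$'s, with equality attained at $u=\varphi_k\in\mathbb{H}_k$. Combining the two bounds yields the claimed formula.

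There is really no genuine obstacle here: the argument is a direct diagonalization via the spectral basis. The only point requiring mild care is the identification of the orthogonality condition defining $\mathbb{P}_{k-1}$, which is phrased in the $H_{\SD}^s$ inner product but, thanks to the identity $\langle u,\varphi_j\rangle_{H_{\SD}^s}=\Lambda_j a_j$ and the positivity of the eigenvalues, coincides with the $L^2$-orthogonality to $\varphi_1,\ldots,\varphi_{k-1}$; once this reduction is made, monotonicity of the eigenvalue sequence does the rest.
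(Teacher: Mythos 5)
Your proof is correct and follows essentially the same diagonalization argument as the paper: expand in the spectral basis, use Parseval and the monotonicity of $\{\Lambda_j\}$ to bound the Rayleigh quotient on $\mathbb{P}_{k-1}$ and $\mathbb{H}_k$, and note that $\varphi_k$ attains both bounds. The only difference is that you make explicit the (correct) reduction of the $H_{\SD}^s$-orthogonality defining $\mathbb{P}_{k-1}$ to $L^2$-orthogonality, a step the paper uses implicitly.
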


\begin{proof}
By \eqref{action} and \eqref{H_norm} we deduce
$$
\left\| u\right\|_{H_{\SD}^s(\Omega)}^2=\sum_{j=1}^\infty \lambda_j^s \left\langle u,\varphi_j\right\rangle _{2}^2.
$$
Then,
\begin{align*}
\left\| u\right\|_{H_{\SD}^s(\Omega)}^2  &= \sum_{j=k}^\infty \lambda_j^s \left\langle u,\varphi_j\right\rangle _{2}^2 \geq \lambda_k^s\sum_{j=k}^\infty \left\langle u,\varphi_j\right\rangle _{2}^2 =\lambda_k^s \left\|u\right\|_{L^2(\Omega)}^2, \\
\left\| v\right\|_{H_{\SD}^s(\Omega)}^2 &= \sum_{j=1}^k \lambda_j^s \left\langle v,\varphi_j\right\rangle _{2}^2 \leq \lambda_k^s\sum_{j=1}^k \left\langle v,\varphi_j\right\rangle _{2}^2 =\lambda_k^s \left\|v\right\|_{L^2(\Omega)}^2,
\end{align*}
for all $u\in \mathbb{P}_{k-1}$ and for all $v\in \mathbb{H}_k$. In both the above relations the equality occurs iff $u,v\in\text{span}\{\varphi_k\}$.
\end{proof}	

The next result will be useful for what follows.

\begin{proposition}\label{phikbounded}
		Let $\lambda>0$ and let $\varphi\in H_{\SD}^1(\Omega)$ be a solution to
		\begin{equation}\label{eigenproblemlaplacian}
			\left\{
			\begin{array}{ll}
				-\Delta u = \lambda u  & \text{in } \Omega\\
				B(u)=0\mkern+12mu & \text{on } \partial\Omega.
			\end{array}
			\right.
		\end{equation}
		Then  $\varphi\in L^\infty(\Omega)$.
	\end{proposition}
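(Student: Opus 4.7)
The plan is a Moser iteration, arranged so that the mixed boundary data enter only through the test function having vanishing trace on $\SD$. Fix $T>0$ and set $\varphi_T\vcentcolon=\min\{|\varphi|,T\}$. For $\beta\geq 1$ choose the test function
\[
\psi\vcentcolon=\varphi\,\varphi_T^{2(\beta-1)}\in H_{\SD}^1(\Omega),
\]
which is admissible because $\varphi_T^{2(\beta-1)}\in L^\infty(\Omega)$ is Lipschitz in $\varphi$, and $\psi$ vanishes on $\SD$ whenever $\varphi$ does, while $\SN$ imposes no restriction. Plugging $\psi$ into the weak formulation of \eqref{eigenproblemlaplacian} and computing $\nabla\varphi\cdot\nabla\psi$ on $\{|\varphi|\leq T\}$ and $\{|\varphi|>T\}$ separately (using $\nabla\varphi_T=\mathrm{sgn}(\varphi)\nabla\varphi\,\chi_{\{|\varphi|\leq T\}}$) yields the Caccioppoli-type estimate
\[
\int_\Omega \varphi_T^{2(\beta-1)}|\nabla\varphi|^2\,dx\;\leq\;\lambda\int_\Omega \varphi^2\varphi_T^{2(\beta-1)}\,dx,
\]
from which, via the elementary pointwise inequality $|\nabla(\varphi\varphi_T^{\beta-1})|^2\leq C\beta^2\varphi_T^{2(\beta-1)}|\nabla\varphi|^2$,
\[
\int_\Omega |\nabla(\varphi\varphi_T^{\beta-1})|^2\,dx\;\leq\;C\beta^2\lambda\int_\Omega \varphi^2\varphi_T^{2(\beta-1)}\,dx.
\]

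Since $\varphi\varphi_T^{\beta-1}\in H_{\SD}^1(\Omega)$ and $|\SD|>0$, the Poincaré-Sobolev inequality for $H_{\SD}^1(\Omega)$ (the classical $s=1$ analogue of \eqref{mix_sobolev}, following from $H_{\SD}^1(\Omega)\hookrightarrow H^1(\Omega)\hookrightarrow L^{2^*}(\Omega)$ combined with Poincaré on $H_{\SD}^1$) yields
\[
\left(\int_\Omega (|\varphi|\varphi_T^{\beta-1})^{2^*}\,dx\right)^{\!\!2/2^*}\leq\;C\beta^2\lambda\int_\Omega \varphi^2\varphi_T^{2(\beta-1)}\,dx,\qquad 2^*\vcentcolon=\tfrac{2N}{N-2}.
\]
Monotone convergence as $T\to\infty$ (licit as soon as $\varphi\in L^{2\beta}(\Omega)$) and taking a $\tfrac{1}{2\beta}$-th power produce the iteration step
\[
\|\varphi\|_{L^{\beta\cdot 2^*}(\Omega)}\;\leq\;\bigl(C\beta^2\lambda\bigr)^{1/(2\beta)}\,\|\varphi\|_{L^{2\beta}(\Omega)}.
\]

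Finally, set $\chi\vcentcolon=2^*/2>1$ and $\beta_n\vcentcolon=\chi^n$. Since $\varphi\in L^{2^*}(\Omega)\subset L^2(\Omega)$, iterating from $\beta_0=1$ gives
\[
\|\varphi\|_{L^{2\chi^{n+1}}(\Omega)}\;\leq\;\prod_{j=0}^n\bigl(C\chi^{2j}\lambda\bigr)^{1/(2\chi^j)}\|\varphi\|_{L^2(\Omega)}.
\]
The geometric decay of the exponents $\tfrac{1}{2\chi^j}$ makes the series $\sum_{j\geq 0}\tfrac{1}{2\chi^j}\log(C\chi^{2j}\lambda)$ convergent, so the product is dominated by a constant $K=K(N,\lambda,|\Omega|,|\SD|)$ independent of $n$. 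Letting $n\to\infty$ we conclude $\|\varphi\|_{L^\infty(\Omega)}\leq K\|\varphi\|_{L^2(\Omega)}<+\infty$. The only delicate issue is the finiteness of each single step prior to the limit in $T$: this is exactly what the truncated choice $\psi=\varphi\,\varphi_T^{2(\beta-1)}$ is designed to secure, bypassing the formally natural but a priori ill-defined test function $|\varphi|^{2(\beta-1)}\varphi$. The boundary data play no further role, because Dirichlet vanishing is inherited by $\psi$ from $\varphi$ and the Neumann part is simply absent from the energy identity.
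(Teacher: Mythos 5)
Your proof is correct and follows essentially the same route as the paper's: a Moser iteration driven by the Sobolev inequality for $H_{\Sigma_{\mathcal D}}^1(\Omega)$ (valid since $|\SD|>0$), with the mixed boundary data entering only through the admissibility of the test function. The only difference is that you truncate ($\psi=\varphi\,\varphi_T^{2(\beta-1)}$) to justify each step rigorously, whereas the paper tests formally with $|\varphi|^{p-2}\varphi$; this is a welcome refinement of, not a departure from, the same argument.
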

\begin{proof}
		The proof follows by a Moser-type iteration process combined with the Stroock-Varopoulos inequality \cite{Stroock1985}.
		Let $(\lambda,\varphi)\in(0,+\infty)\times H_{\SD}^1(\Omega)$ be an eigenpair of \eqref{eigenproblemlaplacian}. By Sobolev inequality for mixed boundary problems
		\begin{equation*}
			S(\Sigma_{\mathcal{D}})\|u\|_{\frac{2N}{N-2}} \le \|\nabla u\|_2, \quad \text{for all } u\in H_{\Sigma_{\mathcal{D}}}^1(\Omega),
		\end{equation*}
		where $S(\Sigma_{\mathcal{D}})$ is the Sobolev constant relative to $\Sigma_D$ (cf. \cite{liopactri1988best}), for each $p>1$, one has
		\begin{equation*}
			\begin{split}
				\lambda\int_\Omega |\varphi|^p dx &= \int_\Omega |\varphi|^{p-2}\varphi(-\Delta \varphi)dx
				=\int_\Omega \nabla (|\varphi|^{p-2}\varphi)\cdot \nabla\varphi dx\\
				&=(p-1)\int_\Omega |\varphi|^{p-2}(\nabla\varphi\cdot \nabla\varphi) dx\\
				  &=\frac{4(p-1)}{p^2}\int_\Omega \left(\frac{p}{2}|\varphi|^{\frac{p-4}{2}}\nabla\varphi\right)\cdot \left(\frac{p}{2}|\varphi|^{\frac{p-4}{2}}\nabla\varphi \right)dx\\
				 &= \frac{4(p-1)}{p^2}\int_\Omega |\nabla |\varphi|^{p/2}|^2 dx\\
				&\ge \frac{4(p-1)}{p^2}S(\Sigma_{\mathcal{D}})\|\varphi\|_{\frac{Np}{N-2}}^p.
			\end{split}
		\end{equation*}		
Therefore, given $p_0>1$, if we set  $p_k:=p_0\left(\frac{N}{N-2}\right)^k$, $k\in\N$, it follows that
		\begin{equation*}
			\|\varphi\|_{p_{k+1}} \le \left(\frac{\lambda}{S(\Sigma_{\mathcal{D}})}\cdot\frac{p_k^2}{4(p_k-1)}\right)^{1/p_k}\|\varphi\|_{p_k}.
		\end{equation*}
		By iterating this scheme, we conclude that
		\begin{equation*}
			\|\varphi\|_\infty \le C(N,p_0)\lambda^\frac{N}{2p_0}\|\varphi\|_{p_0},
		\end{equation*}
		for a suitable constant $C(N,p_0)>0$ and the conclusion is achieved.

\end{proof}

We say that $\Lambda_k$, $k\geq 2$, has multiplicity $m\in\N$ if
$$
\Lambda_{k-1}<\Lambda_k=\Lambda_{k+1}=\ldots=\Lambda_{k+m-1}<\Lambda_{k+m};
$$
in this case the eigenspace associated with $\Lambda_k$ coincides with $\text{span}\{\varphi_k,\ldots,\varphi_{k+m-1}\}$. We set
\begin{equation*}
X_1\vcentcolon=\mathbb{H}_{k-1}, \quad X_2\vcentcolon=\text{span}\{\varphi_k,\ldots,\varphi_{k+m-1}\},\quad X_3\vcentcolon=\mathbb{P}_{k+m-1}.
\end{equation*}

\begin{proposition}\label{prop_supinf}
Let $k\in\N$, $k\geq 2$, and let $m\in\N$ be such that $\Lambda_{k-1}<\lambda<\Lambda_k=\ldots=\Lambda_{k+m-1}<\Lambda_{k+m}$. Then there exist $R,\varrho\in\R$, with $R>\varrho>0$, such that
\begin{equation*}
l_1\vcentcolon=\sup_{u\in B_R(X_1)\cup\partial B_R(X_1\oplus X_2) } I_\lambda(u) < \inf_{u\in \partial B_{\varrho}(X_2\oplus X_3)} I_\lambda(u)\vcentcolon=l_2.
	\end{equation*}
\end{proposition}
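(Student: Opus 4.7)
The plan is to establish a classical linking-type geometry by exploiting the orthogonal decomposition $H_{\SD}^s(\Omega)=X_1\oplus X_2\oplus X_3$, the variational characterization of the eigenvalues from Lemma \ref{varcharacteigen}, and the growth/sign assumptions on $F$. Throughout I write $\|\cdot\|$ for $\|\cdot\|_{H_{\SD}^s(\Omega)}$. I would first establish the lower bound $l_2>0$. On $X_2\oplus X_3\subset \mathbb{P}_{k-1}$, decomposing $u=u_2+u_3$ and using the $L^2$-orthogonality of the eigenfunctions together with Lemma \ref{varcharacteigen}, one obtains
\begin{equation*}
\|u\|^2-\lambda\|u\|_{L^2(\Omega)}^2\geq \left(1-\frac{\lambda}{\Lambda_k}\right)\|u\|^2,
\end{equation*}
whose coefficient is strictly positive since $\lambda<\Lambda_k$. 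Assumptions $(f_1)$ and $(f_3)$ combine to give, for every $\varepsilon>0$, an estimate $|F(x,t)|\leq \varepsilon t^2+C_\varepsilon|t|^q$, which via the embedding $H_{\SD}^s(\Omega)\hookrightarrow L^q(\Omega)$ yields $\int_\Omega F(x,u)\,dx\leq \varepsilon C\|u\|^2+C_\varepsilon'\|u\|^q$. Choosing $\varepsilon$ small enough, I obtain $I_\lambda(u)\geq c_1\|u\|^2-c_2\|u\|^q$; since $q>2$, a sufficiently small $\varrho>0$ makes this positive on $\partial B_\varrho(X_2\oplus X_3)$, so $l_2\geq \tfrac{c_1}{2}\varrho^2>0$.

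For the upper bound I would handle the two pieces of the set separately. On $X_1=\mathbb{H}_{k-1}$, the reverse inequality $\|u_1\|^2\leq \Lambda_{k-1}\|u_1\|_{L^2(\Omega)}^2$ combined with $\lambda>\Lambda_{k-1}$ makes the quadratic part nonpositive and strictly negative for $u_1\neq 0$; meanwhile $(f_4)$ forces $F\geq 0$, hence $-\int_\Omega F\leq 0$. Therefore $I_\lambda\leq 0$ on $B_R(X_1)$, with equality only at the origin. On $\partial B_R(X_1\oplus X_2)$, writing $u=u_1+u_2$ and using the $H_{\SD}^s$- and $L^2$-orthogonality of the decomposition, the contribution from $u_1$ is nonpositive while that from $u_2$ equals $(1-\lambda/\Lambda_k)\|u_2\|^2$, giving
\begin{equation*}
\tfrac12\|u\|^2-\tfrac\lambda2\|u\|_{L^2(\Omega)}^2\leq \tfrac12\left(1-\tfrac{\lambda}{\Lambda_k}\right)R^2.
\end{equation*}
Assumption $(f_2)$ yields $-\int_\Omega F(x,u)\,dx\leq -a_2\|u\|_{L^q(\Omega)}^q+a_3|\Omega|$. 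Since $X_1\oplus X_2=\mathbb{H}_{k+m-1}$ is finite-dimensional, all norms on it are equivalent, so $\|u\|_{L^q(\Omega)}^q\geq c_3R^q$, and hence $I_\lambda(u)\leq C_1R^2-c_4R^q+a_3|\Omega|$, which tends to $-\infty$ as $R\to+\infty$ because $q>2$. Picking $R>\varrho$ large enough makes $I_\lambda\leq 0$ on $\partial B_R(X_1\oplus X_2)$ as well, so $l_1\leq 0<l_2$.

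The main obstacle is precisely the analysis on $\partial B_R(X_1\oplus X_2)$: in the $X_2$-direction the quadratic part is \emph{positive} because $\lambda<\Lambda_k$, so the sign of the quadratic portion alone does not suffice and one must rely crucially on the superlinear lower bound provided by $(f_2)$ combined with the equivalence of norms on the finite-dimensional space $\mathbb{H}_{k+m-1}$ to dominate the positive contribution as $R\to\infty$. Without both ingredients, no uniform control from above could be obtained, and the desired strict linking inequality $l_1<l_2$ would fail.
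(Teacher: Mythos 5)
Your proposal is correct and follows essentially the same route as the paper: the small-sphere lower bound on $X_2\oplus X_3=\mathbb{P}_{k-1}$ via the estimate $|F(x,t)|\leq\varepsilon t^2+C_\varepsilon|t|^q$ from $(f_1)$, $(f_3)$ and Lemma \ref{varcharacteigen}, nonpositivity on $X_1=\mathbb{H}_{k-1}$ from $\lambda>\Lambda_{k-1}$ and $F\geq 0$ by $(f_4)$, and the large-sphere bound on the finite-dimensional space $X_1\oplus X_2$ from $(f_2)$ together with equivalence of norms. The only (immaterial) difference is that you sharpen the quadratic part on $\partial B_R(X_1\oplus X_2)$ to $\tfrac12(1-\lambda/\Lambda_k)R^2$, whereas the paper simply uses $\tfrac12\|u\|^2$; both are dominated by the $-cR^q$ term.
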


\begin{proof}
By $(f_1)$ and $(f_3)$, for every $\varepsilon>0$ there exists $C_\varepsilon>0$ so that
\begin{equation}\label{pre}
|F(x,t)|\leq \varepsilon t^2 +C_\varepsilon|t|^q,
\end{equation}
for a.e. $x\in\Omega$ and every $t\in\R$. Thus, if $u\in X_2\oplus X_3=\mathbb{P}_{k-1}$, because of \eqref{pre}, Lemma \ref{varcharacteigen}, H\"older's inequality and the fractional Sobolev inequality \eqref{mix_sobolev} we get
\begin{align*}
I_\lambda(u) &\geq \frac{1}{2}\left\| u\right\|_{H_{\SD}^s(\Omega)}^2 -\frac{\lambda}{2}\left\|u\right\|_{L^2(\Omega)}^2-\varepsilon \left\|u\right\|_{L^2(\Omega)}^2 - C_\varepsilon\left\|u \right\|_{L^2(\Omega)}^q\\
&\geq \frac{1}{2}\left(1-\frac{\lambda}{\Lambda_k}\right)\left\| u\right\|_{H_{\SD}^s(\Omega)}^2-\varepsilon |\Omega|^{\frac{2s}{N}}\|u\|_{L^{2_s^*}(\Omega)}^{2} - C_\varepsilon|\Omega|^{\frac{2_s^*-q}{2_s^*}}\left\|u \right\|_{L^{2_s^*}(\Omega)}^q\\
&\geq \left( \frac{1}{2}\left( 1-\frac{\lambda}{\Lambda_k}\right)-\varepsilon\frac{|\Omega|^{\frac{2s}{N}}}{\widetilde{S}(\Sigma_{\mathcal{D}})}\right) \left\|u \right\|_{H_{\SD}^s(\Omega)}^2-C_\varepsilon\frac{|\Omega|^{\frac{2_s^*-q}{2_s^*}}}{[\widetilde{S}(\Sigma_{\mathcal{D}})]^{\frac{q}{2}}}\left\|u \right\|_{H_{\SD}^s(\Omega)}^q.
\end{align*}
Taking $\varepsilon$ small enough, it follows that
\begin{equation*}
I_\lambda(u)\geq c_1\| u\|_{H_{\SD}^s(\Omega)}^2\left(1-c_2\left\| u\right\|_{H_{\SD}^s(\Omega)}^{q-2}\right),
\end{equation*}
for suitable constants $c_1,c_2>0$. Then, if $u\in \partial B_{\varrho(X_2\oplus X_3)}$ with $\varrho\in\left(0,c_2^{-1/(q-2)}\right)$, we get
\begin{equation*}
I_\lambda(u)\geq c_1\varrho^2(1-c_2\varrho^{q-2})>0,
\end{equation*}
and, as a result, $l_2>0$. Now, let $u\in X_1=\mathbb{H}_{k-1}$, i.e.,
\begin{equation*}
u(x)=\sum_{i=1}^{k-1}\alpha_i\varphi_i(x),
\end{equation*}
with $\alpha_i=\langle u,\varphi_i\rangle_{2},\ i=1,2,\ldots,k-1$. By $(f_4)$ and Lemma \ref{varcharacteigen} one has
\begin{equation}\label{diseqinHk-1}
\begin{split}	
I_\lambda(u) &\leq\frac{\Lambda_{k-1}}{2}\left\| u\right\|_{L^2(\Omega)}^2-\frac{\lambda}{2} \left\| u \right\|_{L^2(\Omega)}^2 -\int_\Omega F(x,u(x))dx\\
&\leq \frac{\Lambda_{k-1}-\lambda}{2}\left\| u\right\|_{L^2(\Omega)}^2\leq 0,
\end{split}
\end{equation}
since $\Lambda_{k-1}<\lambda$. On the other hand, if $u\in X_1\oplus X_2=\mathbb{H}_k$, because of $(f_2)$, we deduce that
\begin{equation*}
I_\lambda(u)\leq \frac{1}{2}\left\| u\right\|_{H_{\SD}^s(\Omega)}^2 - a_3\left\| u \right\|_{L^q(\Omega)}^q +a_4|\Omega|.
\end{equation*}
Since $u\in X_1\oplus X_2=\mathbb{H}_k$ and $\mathbb{H}_k$ is finite-dimensional, for suitable $c_3,c_4>0$, one obtains
\begin{equation*}
c_3 \left\| u\right\|_{H^s_{\SD}(\Omega)}\leq\left\| u\right\|_{L^q(\Omega)} \leq c_4 \left\| u\right\|_{H^s_{\SD}(\Omega)}.
\end{equation*}
We then get
\begin{equation}\label{ineqilambda}
I_\lambda(u)\leq\frac{1}{2}\left\| u\right\|_{H_{\SD}^s(\Omega)}^2 - a_3 c_3 \left\| u\right\|_{H^s_{\SD}(\Omega)}^q +a_4|\Omega|.
\end{equation}
Taking $R>0$ large enough, we conclude that $I_\lambda(u)\leq 0$ for $u\in \partial B_R(X_1\oplus X_2)$. This, together with \eqref{diseqinHk-1} implies that $l_1\leq 0$.
\end{proof}

Now we address the fulfillment of the $\nabla$-condition for $I_\lambda$. This will require some preliminary steps.

\begin{lemma}\label{uniquecritpoint}
Let $k\in\N$, $k\geq 2$, and let $\Lambda_k$ be an eigenvalue of multiplicity $m\in\N$. Then for any $\sigma>0$ there exists $\varepsilon_\sigma>0$ such that, for any $\lambda\in[\Lambda_{k-1}+\sigma,\Lambda_{k+m}-\sigma]$, the unique critical point $u$ of $I_\lambda |_{\mathbb{H}_{k-1}}\oplus \mathbb{P}_{k+m-1}$ with $I_\lambda(u)\in[-\varepsilon_\sigma,\varepsilon_\sigma]$ is the trivial one.
\end{lemma}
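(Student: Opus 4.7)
The plan is to argue by contradiction. Assume the statement fails; then there exist $\sigma>0$ and sequences $\varepsilon_n\to 0^+$, $\lambda_n\in[\Lambda_{k-1}+\sigma,\Lambda_{k+m}-\sigma]$, $u_n\in\mathbb{H}_{k-1}\oplus\mathbb{P}_{k+m-1}\setminus\{0\}$ such that each $u_n$ is a critical point of $I_{\lambda_n}$ restricted to this subspace and $|I_{\lambda_n}(u_n)|\le\varepsilon_n$. Criticality on $\mathbb{H}_{k-1}\oplus\mathbb{P}_{k+m-1}$ amounts to
\begin{equation*}
\langle u_n,v\rangle_{H^s_{\SD}}-\lambda_n\langle u_n,v\rangle_{L^2}=\int_\Omega f(x,u_n)v\,dx\qquad\text{for every }v\in\mathbb{H}_{k-1}\oplus\mathbb{P}_{k+m-1},
\end{equation*}
and I will contradict the existence of such $\{u_n\}$ via a boundedness, compactness and rescaling scheme.

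First I would show that $\{u_n\}$ is bounded in $H^s_{\SD}(\Omega)$. Testing the equation at $v=u_n$ and subtracting $2I_{\lambda_n}(u_n)=o(1)$ gives $2\int_\Omega F(x,u_n)\,dx=\int_\Omega u_nf(x,u_n)\,dx+o(1)$; the Ambrosetti--Rabinowitz condition $(f_4)$ then yields $(q-2)\int_\Omega F(x,u_n)\,dx\le o(1)$, and since $(f_4)$ also forces $F\ge 0$, one deduces $\int_\Omega F(x,u_n)\,dx\to 0$. The lower bound $(f_2)$ then bounds $\|u_n\|_{L^q(\Omega)}$, and inserting the growth bound $(f_1)$ into $\|u_n\|_{H^s_{\SD}}^2=\lambda_n\|u_n\|_{L^2}^2+\int_\Omega u_nf(x,u_n)\,dx$ completes the uniform estimate. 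Up to subsequences, $u_n\rightharpoonup u^*$ in $H^s_{\SD}(\Omega)$, $u_n\to u^*$ in $L^r(\Omega)$ for every $r\in[1,2_s^*)$, and $\lambda_n\to\lambda^*\in[\Lambda_{k-1}+\sigma,\Lambda_{k+m}-\sigma]$. Passing to the limit in the weak equation (Vitali's theorem handles the nonlinear term via $(f_1)$) shows that $u^*$ is a critical point of $I_{\lambda^*}$ on $\mathbb{H}_{k-1}\oplus\mathbb{P}_{k+m-1}$; testing the limit equation at $u^*$ gives $\|u_n\|_{H^s_{\SD}}^2\to\|u^*\|_{H^s_{\SD}}^2$, hence strong convergence in $H^s_{\SD}(\Omega)$, and consequently $I_{\lambda^*}(u^*)=0$.

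A dichotomy on $u^*$ finishes the proof. If $u^*\not\equiv 0$, combining $I_{\lambda^*}(u^*)=0$ with $\langle I'_{\lambda^*}(u^*),u^*\rangle=0$ yields $2\int_\Omega F(x,u^*)\,dx=\int_\Omega u^*f(x,u^*)\,dx$, while $(f_4)$ forces the right-hand side to be $\ge q\int_\Omega F(x,u^*)\,dx$. Since $q>2$, this gives $\int_\Omega F(x,u^*)\,dx\le 0$; but $F(x,0)=0$ by definition and $(f_4)$ entails $F(x,u^*(x))>0$ on $\{u^*\ne 0\}$, a contradiction. Hence $u^*\equiv 0$, i.e.\ $u_n\to 0$ strongly in $H^s_{\SD}(\Omega)$. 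I would then rescale by setting $\hat u_n:=u_n/\|u_n\|_{H^s_{\SD}}$, so $\|\hat u_n\|_{H^s_{\SD}}=1$, and divide the critical-point equation by $\|u_n\|_{H^s_{\SD}}$. The rescaled nonlinear term is controlled via the splitting $|f(x,t)|\le\varepsilon|t|+C_\varepsilon|t|^{q-1}$ (a consequence of $(f_1)$ and $(f_3)$) combined with $\|u_n\|_{L^q}\to 0$, so it is infinitesimal as $n\to\infty$. The weak limit $\hat u$ of $\{\hat u_n\}$ therefore satisfies $\langle\hat u,v\rangle_{H^s_{\SD}}=\lambda^*\langle\hat u,v\rangle_{L^2}$ for every $v\in\mathbb{H}_{k-1}\oplus\mathbb{P}_{k+m-1}$. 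Decomposing $\hat u=\hat v+\hat w$ with $\hat v\in\mathbb{H}_{k-1}$ and $\hat w\in\mathbb{P}_{k+m-1}$ and testing separately against $\hat v$ and $\hat w$, Lemma~\ref{varcharacteigen} together with the spectral gap $\Lambda_{k-1}<\lambda^*<\Lambda_{k+m}$ (guaranteed by the safety margin $\sigma$) forces $\hat u\equiv 0$. Finally, dividing $\|u_n\|_{H^s_{\SD}}^2=\lambda_n\|u_n\|_{L^2}^2+\int_\Omega u_nf(x,u_n)\,dx$ by $\|u_n\|_{H^s_{\SD}}^2$ and letting $n\to\infty$ produces $1=\lambda^*\|\hat u\|_{L^2}^2+o(1)=o(1)$, the desired contradiction.

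The main obstacle is the vanishing case $u^*\equiv 0$: the rescaled nonlinear remainder involves dividing by a quantity that itself tends to zero, and only the sublinear-at-zero behaviour encoded by $(f_3)$, coupled with the strong $L^q$ convergence $u_n\to 0$, renders it negligible and permits the final spectral-gap argument to close the contradiction.
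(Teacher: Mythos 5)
Your argument is correct, and it reaches the conclusion by a genuinely different route than the paper. The paper's proof hinges on a single quantitative inequality obtained by testing the critical-point identity with the ``reflected'' vector $u_j^{\mathbb{H}}-u_j^{\mathbb{P}}$: via Lemma~\ref{varcharacteigen} this yields \eqref{deltalambdakm}--\eqref{deltaclambda}, i.e.\ $\frac{\tilde\sigma}{c\,\Lambda_{k+m}}\|u_j\|_{H^s_{\SD}(\Omega)}\le\|f(\cdot,u_j)\|_{L^{q/(q-1)}(\Omega)}$, and that one estimate does double duty: its right-hand side is bounded through $\int_\Omega F(x,u_j)\,dx\to0$ (giving boundedness of $\{u_j\}$), and it is $o(\|u_j\|)$ once $u_j\to0$ in $L^q$ (giving the final contradiction). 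You instead obtain boundedness directly from $\int_\Omega F(x,u_n)\,dx\to0$, $(f_2)$ and the equation tested at $u_n$ --- which is simpler and avoids the reflected test function entirely --- then identify the strong limit as $0$ and run a normalization argument: $\hat u_n=u_n/\|u_n\|$ converges to a solution of the linear eigenvalue equation for $\lambda^*$ on $\mathbb{H}_{k-1}\oplus\mathbb{P}_{k+m-1}$, which the spectral gap $\Lambda_{k-1}<\lambda^*<\Lambda_{k+m}$ rules out, and the norm identity then forces $1=o(1)$. Both proofs exploit exactly the same spectral gap, the paper at finite $j$ and you on the limiting linear problem; your blow-down scheme is the more standard resonance-avoidance argument and is somewhat more robust, while the paper's is more quantitative. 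Two small points to make explicit in a final write-up: the bound $\|u_n\|_{L^q(\Omega)}\le C$ from $(f_2)$ tacitly uses $a_2>0$ (as does the paper's own estimate \eqref{deltaclambda}), and in the rescaled nonlinear term the superlinear piece is controlled by $\|u_n\|_{L^q}^{q-1}/\|u_n\|_{H^s_{\SD}}\le C\|u_n\|_{H^s_{\SD}}^{q-2}\to0$, which is where $q>2$ enters.
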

\begin{proof}
For the sake of contradiction, let us assume that there exist $\tilde\sigma>0$, two sequences $\{\mu_j\}\subset[\Lambda_{k-1}+\tilde\sigma,\Lambda_{k+m}-\tilde\sigma]$ and $\{u_j\}\subset \mathbb{H}_{k-1}\oplus \mathbb{P}_{k+m-1}\setminus\{0\}$ such that
\begin{equation}\label{relazJmuj}
\begin{split}
	\left\langle I_{\mu_j}'(u_j),\psi\right\rangle_{H^{-s}} &=0 \quad \text{for any } \psi\in \mathbb{H}_{k-1}\oplus \mathbb{P}_{k+m-1},\ \text{and any } j\in\mathbb{N},\\
	I_{\mu_j}(u_j)& \to 0 \quad \text{as } j\to \infty.
\end{split}
\end{equation}
Testing the first equation of \eqref{relazJmuj} with $\psi=u_j$ and using $(f_4)$ it follows that, for any $j\in\mathbb{N}$,
\begin{align*}
0 & = \left\| u_j\right\|_{H_{\SD}^s(\Omega)}^2 -\mu_j \left\| u_j\right\|_{L^2(\Omega)}^2-\int_\Omega f(x,u_j(x))u_j(x)dx\\
& =2I_{\mu_j}(u_j) +2\int_\Omega F(x,u_j(x))dx-\int_{\Omega}f(x,u_j(x))u_j(x)dx\\
&\leq  2I_{\mu_j}(u_j) + (2-q)\int_\Omega F(x,u_j(x))dx\\
&\leq 2I_{\mu_j}(u_j),
\end{align*}
where the last inequality follows since $q>2$. Thus, we conclude
\begin{equation}\label{intFxUj}
0<\int_\Omega F(x,u_j(x))dx\leq \frac{2}{q-2}I_{\mu_j}(u_j) \to 0 \quad \text{as } j\to\infty.
\end{equation}
Now, let us write $u_j=u_j^{\mathbb{H}}+u_j^{\mathbb{P}}$, with $u_j^{\mathbb{H}}\in \mathbb{H}_{k-1}$ and $u_j^{\mathbb{P}}\in \mathbb{P}_{k+m-1}$. Testing the first equation of \eqref{relazJmuj} with $\psi=u_j^{\mathbb{H}}-u_j^{\mathbb{P}}$ and using Lemma \ref{varcharacteigen} we obtain
\begin{equation}\label{deltalambdakm}
	\begin{split}
	\int_\Omega f(x,u_j(x))(u_j^{\mathbb{H}}(x)-u_j^{\mathbb{P}}(x))dx & =\left\langle u_j,u_j^{\mathbb{H}} \right\rangle_{H_{\SD}^s} - \left\langle u_j,u_j^{\mathbb{P}} \right\rangle_{H_{\SD}^s}\\
	 &\ \ -\mu_j\int_\Omega u_j(x)\left(u_j^{\mathbb{H}}(x)-u_j^{\mathbb{P}}(x)\right)dx\\
	& =\left\| u_j^{\mathbb{H}}\right\|_{H_{\SD}^s}^2-\left\| u_j^{\mathbb{P}}\right\|_{H_{\SD}^s}^2 \mkern-5mu -\mu_j\left\|u_j^{\mathbb{H}} \right\|_{L^2}^2 +\mu_j \left\|u_j^{\mathbb{P}} \right\|_{L^2}^2\\
	&\leq\left( 1-\frac{\mu_j}{\Lambda_{k-1}}\right)\left\| u_j^{\mathbb{H}}\right\|_{H_{\SD}^s}^2 \mkern-10mu + \left(\frac{\mu_j}{\Lambda_{k+m}}-1\right)\left\| u_j^{\mathbb{P}}\right\|_{H_{\SD}^s}^2 \\
	&\leq -\frac{\tilde\sigma}{\Lambda_{k+m}}\left\| u_j\right\|_{H_{\SD}^s}^2,
\end{split}
\end{equation}	
where we used the fact that $u_j^{\mathbb{H}}\perp u_j^\mathbb{P}$. On the other hand, by the compactness of the embedding $H_{\Sigma_{\mathcal{D}}}^s(\Omega)\hookrightarrow L^{r}(\Omega)$, $1\leq r< 2_s^*$, we obtain
\begin{align*}
	\left| \int_\Omega f(x, u_j(x))(u_j^{\mathbb{H}}(x)-u_j^{\mathbb{P}}(x))dx\right| & \leq \left\| f(\cdot,u_j(\cdot))\right\|_{L^\frac{q}{q-1}(\Omega)}\left\|u_j^{\mathbb{H}}-u_j^{\mathbb{P}} \right\|_{L^q(\Omega)}\\
	&\leq c \left\| f(\cdot,u_j(\cdot))\right\|_{L^\frac{q}{q-1}(\Omega)}\left\|u_j \right\|_{H_{\SD}^s(\Omega)},
\end{align*}
for a suitable constant $c>0$. Therefore, 
being $u_j\neq 0$, by \eqref{deltalambdakm}, $(f_1)$ and $(f_4)$ we obtain
\begin{equation}\label{deltaclambda}
	\begin{split}
	\frac{\tilde\sigma}{c\Lambda_{k+m}}\left\| u_j\right\|_{H_{\SD}^s(\Omega)} &\leq \left\| f(\cdot,u_j(\cdot))\right\|_{L^\frac{q}{q-1}(\Omega)}\leq \left( \int_\Omega a_1\left( 1+ |u_j(x)|^{q-1}\right)^{\frac{q}{q-1}} dx \right)^{\frac{q-1}{q}}\\
	& \leq \left( c_1 + c_2 \int_\Omega |u_j(x)|^q dx \right)^{\frac{q-1}{q}}\\
	& \leq \left( c_3 + c_4 \int_\Omega F(x,u_j(x)) dx \right)^{\frac{q-1}{q}}
	\end{split}
\end{equation}
for some positive constants $c_i$, $i=1,\ldots,4$. Taking \eqref{intFxUj} into account we conclude that $\{u_j\}$ is bounded in $H_{\SD}^s(\Omega)$. Therefore, up to a subsequence, $u_j\rightharpoonup u_0\in \mathbb{H}_{k-1}\oplus \mathbb{P}_{k+m-1}$ and, by compact embedding, $u_j\to u_0$ in $L^q(\Omega)$ and $u_j\to u_0$ a.e. in $\Omega$, so that, by $(f_1)$ and $(f_3)$ and the Dominated Convergence Theorem, we get
\begin{equation*}
\int_\Omega F(x,u_j(x))dx \to \int_\Omega F(x,u_0(x))dx \quad\text{as } j\to\infty,
\end{equation*}
and
\begin{equation}\label{lst}
\int_\Omega |f(x,u_j(x)|^{\frac{q}{q-1}}dx \to \int_\Omega |f(x,u_0(x)|^{\frac{q}{q-1}}dx \quad\text{as } j\to\infty,
\end{equation}
that, by $(f_4)$ and \eqref{intFxUj}, implies
\begin{equation}\label{limit}
u_0\equiv 0.
\end{equation}
Let us assume that $u_j\to 0$ strongly in $H_{\SD}^s(\Omega)$. First, let us observe that, for any $\varepsilon>0$ and for suitable constants $C,C_\varepsilon>0$ we have
\begin{align*}
\left\| f(\cdot,u_j(\cdot))\right\|_{L^\frac{q}{q-1}(\Omega)} &\leq \left(\int_\Omega\left(  2\varepsilon|u_j(x)|+qC_\varepsilon|u_j(x)|^{q-1}\right)^{\frac{q}{q-1}}dx\right)^{(q-1)/q}, \\
& \leq \left( 2^{\frac{1}{q-1}}\left( (2\varepsilon)^{\frac{q}{q-1}}\left\|u_j \right\|_{L^{\frac{q}{q-1}}(\Omega)}^{\frac{q}{q-1}} + (qC_\varepsilon)^{\frac{q}{q-1}}\left\|u_j \right\|_{L^{q}(\Omega)}^{q} \right) \right)^{\frac{q-1}{q}} \\
& \leq 2\varepsilon \left\| u_j\right\|_{L^{\frac{q}{q-1}}(\Omega)} +q C_\varepsilon \left\| u_j\right\|_{L^q(\Omega)}^{q-1}.
\end{align*}
This, jointly with \eqref{deltaclambda}, leads to the contradiction $\displaystyle 0<\frac{\tilde\sigma}{c\Lambda_{k+m}}\leq 2C\varepsilon$, due to the arbitrariness of $\varepsilon$. If, on the contrary, $u_j\nrightarrow 0$, then $\left\| u_j\right\|_{H_{\SD}^s(\Omega)}\geq C>0$ for some $C>0$ and $j$ large enough. Therefore, by \eqref{deltaclambda}, \eqref{lst}, \eqref{limit} we again get a contradiction, namely $\displaystyle \frac{\tilde\sigma}{c\Lambda_{k+m}}\leq0$.
\end{proof}

\begin{lemma}\label{Ujbounded}
	Let $k\in\N$, $k\geq 2$, let  $\Lambda_k$ be an eigenvalue of multiplicity $m\in\N$, $\lambda\in\R$ and $\Pi_1\vcentcolon=\Pi_{\text{\rm span}\{\varphi_k,\ldots,\varphi_{k+m-1}\}}$ and $\Pi_2\vcentcolon=\Pi_{\mathbb H_{k-1}\oplus \mathbb{P}_{k+m-1}}$. Then, any sequence $\{u_j\}\subset H_{\SD}^s(\Omega)$ such that
	\begin{itemize}
		\item[$(i)$] $\left\{I(u_j)\right\}$ is bounded,
		\item[$(ii)$] $\Pi_1 [u_j]\to 0$ as $j\to\infty$ in $H_{\SD}^s(\Omega)$,
		\item[$(iii)$] $\Pi_2[\nabla I_\lambda(u_j)]\to 0$ as $j\to\infty$ in $H_{\SD}^s(\Omega)$,
	\end{itemize}
is bounded in $H_{\SD}^s(\Omega)$.
\end{lemma}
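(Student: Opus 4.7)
I argue by contradiction: suppose $\|u_j\|_{H_{\SD}^s(\Omega)}\to+\infty$, and set $\hat u_j := u_j/\|u_j\|_{H_{\SD}^s(\Omega)}$. Up to a subsequence, $\hat u_j\rightharpoonup \hat u$ in $H_{\SD}^s(\Omega)$ and, since $q<2_s^*$, $\hat u_j\to \hat u$ strongly in $L^2(\Omega)$ and $L^q(\Omega)$ by compact embedding. Write $u_j = v_j+w_j$ with $v_j:=\Pi_1 u_j$ and $w_j:=\Pi_2 u_j$; by (ii), $\|v_j\|_{H_{\SD}^s(\Omega)}\to 0$.

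The plan rests on two normalized limits. First, from (i) and the explicit form of $I_\lambda$, dividing by $\|u_j\|_{H_{\SD}^s(\Omega)}^2$:
\[
\frac{1}{\|u_j\|_{H_{\SD}^s(\Omega)}^2}\int_\Omega F(x,u_j)\,dx \longrightarrow \frac12\bigl(1-\lambda\|\hat u\|_{L^2(\Omega)}^2\bigr),
\]
whose right-hand side is nonnegative because $F\ge 0$ by $(f_4)$. Second, using the identity $\int_\Omega f(x,u_j)u_j\,dx = \|u_j\|_{H_{\SD}^s(\Omega)}^2-\lambda\|u_j\|_{L^2(\Omega)}^2 -\langle I_\lambda'(u_j),v_j\rangle-\langle I_\lambda'(u_j),w_j\rangle$ and showing that both inner-product terms are $o(\|u_j\|_{H_{\SD}^s(\Omega)}^2)$, I obtain
\[
\frac{1}{\|u_j\|_{H_{\SD}^s(\Omega)}^2}\int_\Omega f(x,u_j)u_j\,dx \longrightarrow 1-\lambda\|\hat u\|_{L^2(\Omega)}^2.
\]
The $w_j$-contribution is $o(\|u_j\|_{H_{\SD}^s(\Omega)})$ by (iii) and Cauchy--Schwarz. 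For the $v_j$-contribution, the quadratic part $\|v_j\|^2_{H_{\SD}^s(\Omega)}-\lambda\|v_j\|^2_{L^2(\Omega)}$ is clearly $o(1)$; the integral $\int_\Omega f(x,u_j)v_j\,dx$ is estimated by $\|v_j\|_{L^\infty(\Omega)}\|f(\cdot,u_j)\|_{L^1(\Omega)}$, with $\|v_j\|_{L^\infty(\Omega)}\le C\|v_j\|_{H_{\SD}^s(\Omega)}\to 0$ thanks to Proposition~\ref{phikbounded} and norm equivalence on the finite-dimensional eigenspace of $\Lambda_k$. The crucial bootstrap here, derived from $I_\lambda(u_j)=O(1)$ (whence $\int_\Omega F(x,u_j)\,dx=O(\|u_j\|_{H_{\SD}^s(\Omega)}^2)$) and $(f_2)$ (whence $a_2\|u_j\|_{L^q(\Omega)}^q\le \int_\Omega F(x,u_j)\,dx+a_3|\Omega|$), is
\[
\|u_j\|_{L^q(\Omega)}=O\bigl(\|u_j\|_{H_{\SD}^s(\Omega)}^{2/q}\bigr);
\]
combining this with $(f_1)$ gives $\|f(\cdot,u_j)\|_{L^1(\Omega)}=O(\|u_j\|_{H_{\SD}^s(\Omega)}^{2(q-1)/q})$, and since $2(q-1)/q<2$ the product is indeed $o(\|u_j\|_{H_{\SD}^s(\Omega)}^2)$.

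The Ambrosetti--Rabinowitz inequality from $(f_4)$, i.e.\ $\int_\Omega f(x,u_j)u_j\,dx\ge q\int_\Omega F(x,u_j)\,dx$, passed to the limit via the two identities above forces $(1-q/2)(1-\lambda\|\hat u\|_{L^2(\Omega)}^2)\ge 0$; since $q>2$, this yields $\lambda\|\hat u\|_{L^2(\Omega)}^2\ge 1$ (in particular $\lambda>0$, $\hat u\not\equiv 0$ and $\|\hat u\|_{L^q(\Omega)}>0$). But then $(f_2)$ gives
\[
\frac{1}{\|u_j\|^2_{H_{\SD}^s(\Omega)}}\int_\Omega F(x,u_j)\,dx \ge a_2\|\hat u_j\|_{L^q(\Omega)}^q\,\|u_j\|_{H_{\SD}^s(\Omega)}^{q-2}-\frac{a_3|\Omega|}{\|u_j\|^2_{H_{\SD}^s(\Omega)}} \longrightarrow +\infty,
\]
because $\|\hat u_j\|_{L^q(\Omega)}\to \|\hat u\|_{L^q(\Omega)}>0$ and $q>2$, contradicting the finiteness of the first limit. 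Hence $\{u_j\}$ must be bounded in $H_{\SD}^s(\Omega)$. The main technical obstacle is precisely the bootstrap $\|u_j\|_{L^q(\Omega)}=O(\|u_j\|^{2/q}_{H_{\SD}^s(\Omega)})$, without which the term $\int_\Omega f(\cdot,u_j)v_j\,dx$ cannot be absorbed after normalization when $q>3$; the naive Sobolev estimate $\|u_j\|_{L^q(\Omega)}\le C\|u_j\|_{H_{\SD}^s(\Omega)}$ alone is not sharp enough.
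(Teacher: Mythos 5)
Your proof is correct, and it assembles the same core ingredients as the paper's argument — contradiction plus normalization, the Ambrosetti–Rabinowitz condition $(f_4)$, the splitting of $\langle I_\lambda'(u_j),u_j\rangle$ along $\Pi_1$ and $\Pi_2$, the $L^\infty$ control of the $\Pi_1$-component via Proposition \ref{phikbounded}, and the key estimate $\|u_j\|_{L^q(\Omega)}^q=O(\|u_j\|_{H_{\SD}^s(\Omega)}^2)$, which is the paper's \eqref{eqb} — but organizes them differently and, in one respect, more efficiently. The paper works in two normalization stages: it first divides the identity \eqref{relationQnabla} by $\|u_j\|_{H_{\SD}^s(\Omega)}^q$, a scale at which the troublesome term $\int_\Omega f(x,u_j)\Pi_1[u_j]\,dx$ is negligible using only the crude bound $\|u_j\|_{L^q(\Omega)}\le C\|u_j\|_{H_{\SD}^s(\Omega)}$, concludes that the weak limit of the normalized sequence vanishes, deduces from this that $\int_\Omega F(x,u_j)\,dx/\|u_j\|_{H_{\SD}^s(\Omega)}^2\to \tfrac12$ and hence \eqref{eqb}, and only then re-divides by $\|u_j\|_{H_{\SD}^s(\Omega)}^2$ to reach a contradiction. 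You instead extract the bootstrap directly from hypothesis $(i)$ and $(f_2)$, since $\int_\Omega F(x,u_j)\,dx=\tfrac12\|u_j\|_{H_{\SD}^s(\Omega)}^2-\tfrac{\lambda}{2}\|u_j\|_{L^2(\Omega)}^2-I_\lambda(u_j)=O(\|u_j\|_{H_{\SD}^s(\Omega)}^2)$ without any knowledge of the weak limit; this lets you work at the quadratic scale from the outset and close the argument by a sign analysis of the AR inequality, $1-\lambda\|\hat u\|_{L^2(\Omega)}^2\ge \tfrac q2\bigl(1-\lambda\|\hat u\|_{L^2(\Omega)}^2\bigr)$ with $q>2$ forcing $\hat u\not\equiv 0$, which is then incompatible with $(f_2)$ and the finiteness of $\lim\int_\Omega F(x,u_j)\,dx/\|u_j\|_{H_{\SD}^s(\Omega)}^2$. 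This buys a one-pass proof that skips the intermediate $\|u_j\|_{H_{\SD}^s(\Omega)}^{-q}$ normalization, and your diagnosis that the naive Sobolev estimate cannot absorb $\int_\Omega f(x,u_j)\Pi_1[u_j]\,dx$ at the quadratic scale once $q>3$ is precisely the obstruction the paper's first stage is designed to circumvent. (Both your argument and the paper's implicitly read $a_2>0$ in $(f_2)$; that is clearly the intended hypothesis.)
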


\begin{proof}
Arguing by contradiction and without losing generality, let $u_j=\Pi_1[u_j]+\Pi_2[u_j]$ be such that $\left\| u_j\right\|_{H_{\SD}^s(\Omega)}\to +\infty $ as $j\to\infty$ and let $u_0\in H_{\SD}^s(\Omega)$ be such that
\begin{equation}\label{convergencesUj}
\frac{u_j}{\left\| u_j\right\|_{H_{\SD}^s(\Omega)}}\rightharpoonup u_0 \text { in } H_{\SD}^s(\Omega), \quad \frac{u_j}{\left\| u_j\right\|_{H_{\SD}^s(\Omega)}}\to u_0 \text { in } L^p(\Omega), \ p\in[1,2^*_s).
\end{equation}
Now, being $H_{\SD}^s(\Omega)$ a Hilbert space and $I_\lambda$ a $C^1$ functional, we define the gradient $\nabla I_\lambda$ of $I_\lambda$, as usual, by
\begin{equation*}
\langle \nabla I_\lambda(u),v \rangle_{H_{\SD}^s}\vcentcolon=\langle I_\lambda'(u),v \rangle_{H^{-s}},
\end{equation*}
for any $u,v\in H_{\SD}^s(\Omega)$. Let $r\in[1,2_s^*]$ and $\mathcal{K}: L^{\frac{r}{r-1}}(\Omega)\to H_{\SD}^s(\Omega)$ be the operator defined by $\mathcal{K}=(-\Delta)^{-s}$, that is, $\mathcal{K}(g)=v$, where $v\in H_{\SD}^s(\Omega)$ is a weak solution to
\begin{equation}
\left\{
	\begin{array}{rl}
		(-\Delta)^{s} v = g  & \text{in } \Omega\\
		               B(v) = 0  & \text{on } \partial\Omega.
	\end{array}
	\right.
\end{equation}
Let us note that,
\begin{align*}
	\left\langle \mathcal K\left( \lambda u+f(\cdot,u)\right),\psi\right\rangle_{H_{\SD}^s(\Omega)} & =\int_\Omega \left(\lambda u(x)+f(x,u(x))\right)\psi(x) dx\\
	& =-\left\langle I'_\lambda(u),\psi\right\rangle_{H^{-s}} + \left\langle u,\psi\right\rangle_{H_{\SD}^s} \\
	&=   -\left\langle \nabla I_\lambda(u),\psi\right\rangle_{H_{\SD}^s} + \left\langle u,\psi\right\rangle_{H_{\SD}^s},
\end{align*}
and, hence
\begin{equation}\label{graduK}
\nabla I_\lambda(u)=u-\mathcal K(\lambda u+f(x,u)),
\end{equation}
for all $u\in H_{\SD}^s(\Omega)$. As a consequence, we find
\begin{align*}
	\left\langle \Pi_2\left[\nabla I_\lambda(u_j)\right],u_j\right\rangle_{H_{\SD}^s}  & = \left\langle \nabla I_\lambda(u_j),u_j\right\rangle_{H_{\SD}^s} - \left\langle \Pi_1\left[\nabla I_\lambda(u_j)\right],u_j\right\rangle_{H_{\SD}^s}\\
	&=\left\| u_j\right\|_{H_{\SD}^s(\Omega)}^2 - \lambda \left\| u\right\|_{L^2(\Omega)}^2 -\int_\Omega f(x,u_j(x))u_j(x) dx \\
	&\ \ - \left\langle \Pi_1\left[u_j-\mathcal K(\lambda u_j+f(x,u_j))\right],u_j\right\rangle_{H_{\SD}^s}.
\end{align*}
Moreover, since $\left\langle \Pi_1[u],v\right\rangle_{H_{\SD}^s}=\left\langle u,\Pi_1[v]\right\rangle_{H_{\SD}^s}$ for all $u,v\in H_{\SD}^s(\Omega)$, one has
\begin{align*}
\left\langle \Pi_1[u_j-\mathcal K(\lambda u_j+f(x,u_j))],u_j\right\rangle_{H_{\SD}^s} &= \left\| \Pi_1u_j\right\|_{H_{\SD}^s}^2 -\lambda \left\langle \Pi_1[u_j],\mathcal K(u_j) \right\rangle_{H_{\SD}^s}\\
&\ \ - \left\langle \Pi_1[u_j],\mathcal K(f(x,u_j))\right\rangle_{H_{\SD}^s},
\end{align*}
and, by \eqref{graduK},
\begin{align*}
	\lambda \left\langle \Pi_1[u_j],\mathcal{K}(u_j)\right\rangle_{H_{\SD}^s}&+ \left\langle \Pi_1[u_j],\mathcal{K}(f(x,u_j))\right\rangle_{H_{\SD}^s}\\
	& = \lambda \left\| \Pi_1[u_j]\right\|_{L^2(\Omega)}^2 + \int_\Omega f(x,u_j(x))\Pi_1[u_j](x) dx.
\end{align*}
Therefore, we deduce
\begin{equation}\label{relationQnabla}
	\begin{split}
		\left\langle \Pi_2\left[\nabla I_\lambda(u_j)\right],u_j\right\rangle_{H_{\SD}^s} & = 2I_\lambda(u_j) +2\int_\Omega F(x, u_j(x)) dx\\
		&\ \  -\int_\Omega f(x, u_j(x))\Big(u_j(x)-\Pi_1[u_j](x)\Big) dx\\
		&\ \  - \left\| \Pi_1[u_j]\right\|_{H_{\SD}^s}^2 +\lambda\left\| \Pi_1[u_j]\right\|_{L^2(\Omega)}^2.
	\end{split}
\end{equation}
Thus, by $(i)-(iii)$ it follows that
\begin{equation}\label{fractiongoingtozero}
\frac{\displaystyle 2\int_\Omega F(x, u_j(x)) dx -\int_\Omega f(x, u_j(x))u_j(x) dx +\int_\Omega f(x, u_j(x)) \Pi_1[u_j](x) dx}{\left\| u_j\right\|^q}\to 0
\end{equation}
as $j\to \infty$.  Next, by $(f_1)$ and Proposition \ref{phikbounded} we obtain
\begin{equation*}
|f(x,u_j(x))\Pi_1[u_j](x)| \leq a_1\left\| \Pi_1[u_j]\right\|_{L^\infty(\Omega)}(1+|u_j(x)|^{q-1}),
\end{equation*}
for a.e. $x\in\Omega$, and hence, since $(ii)$ forces $\left\| \Pi_1[u_j]\right\|_{L^\infty(\Omega)}\to 0$ (on $\text{span}\{\varphi_k,\ldots,\varphi_{k+m-1}\}$ all norms are equivalent), by Dominated Convergence
\begin{equation*}
\frac{\displaystyle \int_\Omega f(x, u_j(x)) \Pi_1[u_j](x) dx}{\left\| u_j\right\|^q}\to 0,
\end{equation*}
as $j\to\infty$.
Thus, by $(f_4)$ and \eqref{fractiongoingtozero} we find
\begin{equation*}
0\geq\frac{(2-q)\displaystyle\int_\Omega F(x, u_j(x)) dx}{\|u_j\|_{H_{\SD}^s(\Omega)}}\geq\frac{\displaystyle 2\int_\Omega F(x, u_j(x)) dx -\int_\Omega f(x, u_j(x))u_j(x) dx}{\left\| u_j\right\|_{H_{\SD}^s(\Omega)}^q}\to 0,
\end{equation*}
and, therefore,
\begin{equation*}
\lim_{j\to\infty}\frac{\displaystyle \int_\Omega F(x,u_j(x))dx}{\left\| u_j\right\|_{H_{\SD}^s(\Omega)}^q} = \lim_{j\to\infty}\frac{\displaystyle \left\|u_j\right\|_{L^q(\Omega)}^q}{\left\| u_j\right\|_{H_{\SD}^s(\Omega)}^q}=0,
\end{equation*}
where we have used the assumption $\left\| u_j\right\|_{H_{\SD}^s(\Omega)}\to +\infty $ as $j\to\infty$.
The above limit, together with \eqref{convergencesUj}, forces $u_0\equiv 0$.

Now, observe that one has
\begin{equation*}
\frac{I_\lambda(u_j)}{\left\|u_j \right\|_{H_{\SD}^s(\Omega)}^2}=\frac12-\frac{\lambda \left\|u_j \right\|_{L^2(\Omega)}^2}{2 \left\|u_j \right\|_{H_{\SD}^s(\Omega)}^2}-\frac{\displaystyle \int_\Omega  F(x,u_j(x))dx}{\left\|u_j \right\|_{H_{\SD}^s(\Omega)}^2}\to0.
\end{equation*}
Then, because of \eqref{convergencesUj}, used here with $p=2$, and the fact that $u_0\equiv0$, we find
\begin{equation}\label{eqa}
\frac{\displaystyle \int_\Omega  F(x,u_j(x))dx}{\left\|u_j \right\|_{H_{\SD}^s(\Omega)}^2}\to\frac12,
\end{equation}
which, together with $(f_2)$ and $\left\| u_j\right\|_{H_{\SD}^s(\Omega)}\to +\infty $ as $j\to\infty$, implies
\begin{equation}\label{eqb}
\left\| u_j\right\|_{L^q(\Omega)}^q\leq C \|u_j\|_{H_{\SD}^s(\Omega)}^2,
\end{equation}
for any $j\in\mathbb{N}$ and for some constant $C>0$. To continue, by $(f_1)$ and H\"older's inequality, we get
\begin{align*}
	\int_\Omega \left| f(x,u_j(x))\Pi_1[u_j]\right| dx & \leq a_1\left\|\Pi_1[u_j] \right\|_{L^\infty(\Omega)} \left( |\Omega|+\left\| u_j\right\|_{L^{q-1}(\Omega)}^{q-1}\right)\\
	& \leq c_1\left\|\Pi_1[u_j] \right\|_{L^\infty(\Omega)} (1 + \left\|u_j \right\|_{L^q(\Omega)}^{q-1}),
\end{align*}
for some $c_1>0$, and ought to \eqref{eqb}, we obtain
\begin{equation*}
\frac{\displaystyle\int_\Omega \left| f(x,u_j(x))\Pi_1[u_j](x)\right|  dx}{\|u_j\|_{H_{\SD}^s(\Omega)}^2 } \leq c_2\left\|\Pi_1[u_j] \right\|_{L^\infty(\Omega)} \left( \frac{1}{\|u_j\|_{H_{\SD}^s(\Omega)}^2}+\frac{1}{\|u_j\|_{H_{\SD}^s(\Omega)}^{2/q}}\right)\to 0.
\end{equation*}
As a result, dividing both sides of \eqref{relationQnabla} by $\|u_j\|_{H_{\SD}^s(\Omega)}^2$ and taking also account of  $(i)-(iii)$, we conclude that
\begin{equation*}
\frac{\displaystyle 2\int_\Omega  F(x,u_j(x))dx -\int_\Omega f(x,u_j(x)) dx}{\left\| u_j\right\|_{H_{\SD}^s(\Omega)}^2}\to 0.
\end{equation*}
By using $(f_4)$, this yields
\begin{equation*}
\frac{\displaystyle 2\int_\Omega  F(x,u_j(x))dx}{\left\| u_j\right\|_{H_{\SD}^s(\Omega)}^2}\to 0,
\end{equation*}
in contradiction with \eqref{eqa}.
\end{proof}	

We are now in a position to show that $I_\lambda$ satisfies the $\nabla$-condition.

\begin{proposition}\label{nablacond}
Let $k\in\N$, $k\geq 2$, and let $\Lambda_k$ be an eigenvalue of multiplicity $m\in\N$. Then, for any $\sigma>0$ there exists $\varepsilon_\sigma>0$ such that, for any $\lambda\in[\Lambda_{k-1}+\sigma, \Lambda_{k+m}-\sigma]$ and for any $\varepsilon',\varepsilon''\in(0,\varepsilon_\sigma)$, with $\varepsilon'<\varepsilon''$, the functional $I_\lambda$ satisfies  $(\nabla)(I_\lambda,\mathbb{H}_{k-1}\oplus \mathbb{P}_{k+m-1},\varepsilon',\varepsilon'')$.
\end{proposition}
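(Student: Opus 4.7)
My strategy is to argue by contradiction, combining the boundedness provided by Lemma \ref{Ujbounded} with the uniqueness of small-energy critical points given by Lemma \ref{uniquecritpoint}. Fix $\sigma>0$, let $\varepsilon_\sigma>0$ be the constant furnished by Lemma \ref{uniquecritpoint}, and abbreviate $M:=\mathbb{H}_{k-1}\oplus\mathbb{P}_{k+m-1}$, so that $M^\perp=\text{span}\{\varphi_k,\ldots,\varphi_{k+m-1}\}$ is finite-dimensional of dimension $m$. Suppose the claim fails: then for some $\lambda\in[\Lambda_{k-1}+\sigma,\Lambda_{k+m}-\sigma]$ and some $0<\varepsilon'<\varepsilon''<\varepsilon_\sigma$ there is a sequence $\{u_j\}\subset H_{\SD}^s(\Omega)$ with $d(u_j,M)\to 0$, $I_\lambda(u_j)\in[\varepsilon',\varepsilon'']$, and $\Pi_M\nabla I_\lambda(u_j)\to 0$ in $H_{\SD}^s(\Omega)$. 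Writing $u_j=v_j+w_j$ with $v_j=\Pi_M u_j\in M$ and $w_j=\Pi_{M^\perp}u_j\in M^\perp$, the hypothesis yields $\|w_j\|_{H_{\SD}^s(\Omega)}=d(u_j,M)\to 0$. With the identifications $\Pi_1=\Pi_{M^\perp}$ and $\Pi_2=\Pi_M$, the three hypotheses $(i)$--$(iii)$ of Lemma \ref{Ujbounded} are satisfied, whence $\{u_j\}$ is bounded in $H_{\SD}^s(\Omega)$.

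Passing to a subsequence, $u_j\rightharpoonup u_0$ weakly in $H_{\SD}^s(\Omega)$ and $u_j\to u_0$ strongly in $L^r(\Omega)$ for every $r\in[1,2_s^*)$. To upgrade this to strong convergence in $H_{\SD}^s(\Omega)$, I exploit the representation
\begin{equation*}
u_j \;=\; \nabla I_\lambda(u_j)\,+\,\mathcal{K}\bigl(\lambda u_j+f(\cdot,u_j)\bigr)
\end{equation*}
coming from \eqref{graduK}. By $(f_1)$ the Nemytskii operator $u\mapsto f(\cdot,u)$ is continuous from $L^q(\Omega)$ into $L^{q/(q-1)}(\Omega)$, and since $q<2_s^*$ the embedding $L^{q/(q-1)}(\Omega)\hookrightarrow(H_{\SD}^s(\Omega))'$ is continuous (in fact compact, being the dual of the compact embedding $H_{\SD}^s(\Omega)\hookrightarrow L^q(\Omega)$). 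Consequently $\mathcal{K}(\lambda u_j+f(\cdot,u_j))\to\mathcal{K}(\lambda u_0+f(\cdot,u_0))$ strongly in $H_{\SD}^s(\Omega)$. Splitting $\nabla I_\lambda(u_j)=\Pi_M\nabla I_\lambda(u_j)+\Pi_{M^\perp}\nabla I_\lambda(u_j)$, the $M$-component tends to $0$ by assumption, while $\Pi_{M^\perp}\nabla I_\lambda(u_j)=w_j-\Pi_{M^\perp}\mathcal{K}(\lambda u_j+f(\cdot,u_j))$ is the sum of two strongly convergent sequences. Therefore $\nabla I_\lambda(u_j)$, and then $u_j$ itself, converges strongly in $H_{\SD}^s(\Omega)$. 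Since $w_j\to 0$, also $v_j\to u_0$, and $M$ being closed forces $u_0\in M$.

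By continuity of $\nabla I_\lambda$ and of $I_\lambda$, passing to the limit gives $\Pi_M\nabla I_\lambda(u_0)=0$ and $I_\lambda(u_0)\in[\varepsilon',\varepsilon'']\subset(0,\varepsilon_\sigma)$; that is, $u_0$ is a critical point of $I_\lambda|_M$ with energy in $(0,\varepsilon_\sigma)$. This contradicts Lemma \ref{uniquecritpoint}, which forces $u_0=0$ (and hence $I_\lambda(u_0)=0$) in this energy window. The main technical pivot in the argument is the upgrade from weak to strong convergence in $H_{\SD}^s(\Omega)$: this is precisely where the subcritical growth $q<2_s^*$ plays a decisive role, granting compactness of the resolvent-type operator $\mathcal{K}$ composed with the Nemytskii map, while the finite-dimensionality of $M^\perp$ keeps the orthogonal-complement component of the gradient harmless.
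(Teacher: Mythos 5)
Your proof is correct and follows essentially the same route as the paper's: a contradiction argument that combines Lemma \ref{Ujbounded} for boundedness, the compactness of $\mathcal{K}$ composed with the Nemytskii map to upgrade weak to strong convergence via the identity \eqref{graduK}, and Lemma \ref{uniquecritpoint} to rule out a nontrivial limiting critical point with positive energy. The only cosmetic difference is that you split $\nabla I_\lambda(u_j)$ into its $M$- and $M^\perp$-components to obtain strong convergence, whereas the paper passes to the limit directly in the expression for $\Pi_M\nabla I_\lambda(u_j)$; the two computations are equivalent.
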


\begin{proof}
We argue by contradiction. Let us assume the existence of $\sigma>0$ such that for every $\varepsilon_0>0$ there exists $\bar\lambda\in[\Lambda_{k-1}+\sigma,\Lambda_{k+m}-\sigma]$ and $\varepsilon'<\varepsilon''$ in $(0,\varepsilon_0)$ such that $(\nabla)(I_{\bar\lambda},\mathbb{H}_{k-1}\oplus \mathbb{P}_{k+m-1},\varepsilon',\varepsilon'')$ does not hold.

Take $\varepsilon_0>0$ as in Lemma \ref{uniquecritpoint}, i.e., such that $u=0$ is the only critical point of $I_{\bar\lambda}$ on $\mathbb{H}_{k-1}\oplus \mathbb{P}_{k+m-1}\cap I_{\bar\lambda}^{-1}([-\varepsilon_0,\varepsilon_0])$, and let $\{u_j\}\subset H_{\SD}^s(\Omega)$ be such that
$$
I_{\bar\lambda}(u_j)\in[\varepsilon',\varepsilon''], \text{ for all } j\in\N, \quad \text{dist}(u_j,\mathbb{H}_{k-1}\oplus \mathbb{P}_{k+m-1})\to 0,
$$
and
\begin{equation}\label{Pnabla}
	\begin{split}
\Pi_{\mathbb{H}_{k-1}\oplus P_{k+m-1}}\nabla I_{\bar\lambda}(u_j) &= u_j-\Pi_{\text{span}\{\varphi_k,\ldots,\varphi_{k+m-1}\}}u_j\\
&\ \ -\Pi_{\mathbb{H}_{k-1}\oplus \mathbb{P}_{k+m-1}}\mathcal{K}(\bar\lambda u_j +f(x,u_j))\to 0,
\end{split}
\end{equation}
as $j\to\infty$. By Lemma \ref{Ujbounded}, $\{u_j\}$ is bounded in $H_{\SD}^s(\Omega)$ so, up to a subsequence, $u_j\rightharpoonup u_0$ in $H_{\SD}^s(\Omega)$ and $u_j\to L^r(\Omega)$, for all $r\in[1,2^*_s)$ . Since $\mathcal K=(-\Delta)^{-s}$ is a compact operator, by standard dominated convergence arguments we get
$$
\Pi_{\mathbb{H}_{k-1}\oplus \mathbb{P}_{k+m-1}}\mathcal K(\bar\lambda u_j +f(x,u_j))\to \Pi_{\mathbb{H}_{k-1}\oplus \mathbb{P}_{k+m-1}}\mathcal K(\bar\lambda u_0 +f(x,u_0))
$$
as $j\to \infty$. As a result, taking the limit as $j\to\infty$ in \eqref{Pnabla} we obtain
$$
u_j\to \Pi_{\mathbb{H}_{k-1}\oplus \mathbb{P}_{k+m-1}}(-\Delta)^{-s}(\bar\lambda u_0 +f(x,u_0))=u_0 \text { in } H_{\SD}^s(\Omega).
$$
In addition, $u_0$ turns out to be a critical point of $I_{\bar\lambda}|_{\mathbb{H}_{k-1}\oplus \mathbb{P}_{k+m-1}}$. Indeed, by \eqref{Pnabla}, we get
\begin{align*}
	\left\langle I'_{\bar\lambda}(u_j),\psi\right\rangle_{H_{\SD}^s} & = \left\langle u_j,\psi\right\rangle_{H_{\SD}^s} -\lambda\int_\Omega u_j(x)\psi(x) dx -\int_\Omega f(x,u_j(x))\psi(x)dx \to 0
\end{align*}
as $j\to\infty$, for all $\psi\in \mathbb{H}_{k-1}\oplus \mathbb{P}_{k+m-1}$, and by using again the dominated convergence theorem,
$$
\left\langle I'_{\bar\lambda}(u_0),\psi\right\rangle_{H_{\SD}^s} = \left\langle u_0,\psi\right\rangle_{H_{\SD}^s} -\lambda\int_\Omega u_0(x)\psi(x) dx -\int_\Omega f(x,u_0(x))\psi(x)dx.
$$
So, it must be $u_0=0$. Since $I_{\bar\lambda}(u_j)\geq\varepsilon'>0$, we get $I_{\bar\lambda}(u_0)>0$, a contradiction.
\end{proof}

Finally we show the validity of Palais-Smale condition for $I_\lambda$ at any level.
\begin{proposition}\label{propPS}
The functional $I_\lambda$ satisfies $(PS)_c$ for any $\lambda\in(0,+\infty)$ and any $c\in\R$.
\end{proposition}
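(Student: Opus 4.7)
The plan is to establish $(PS)_c$ by the usual two-step strategy: first prove that every Palais--Smale sequence $\{u_j\}\subset H_{\SD}^s(\Omega)$ at level $c$ is bounded in $H_{\SD}^s(\Omega)$, then extract a strongly convergent subsequence by exploiting the subcritical compact embeddings.

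For the boundedness, I would argue by contradiction: suppose $t_j\vcentcolon=\|u_j\|_{H_{\SD}^s(\Omega)}\to+\infty$ and set $v_j\vcentcolon=u_j/t_j$, so that $\|v_j\|_{H_{\SD}^s(\Omega)}=1$ and, up to a subsequence, $v_j\rightharpoonup v_0$ in $H_{\SD}^s(\Omega)$, $v_j\to v_0$ in $L^r(\Omega)$ for every $r\in[1,2_s^*)$, and $v_j\to v_0$ a.e.\ in $\Omega$. From the Ambrosetti--Rabinowitz-type condition $(f_4)$ one has
\[
qI_\lambda(u_j)-\langle I_\lambda'(u_j),u_j\rangle_{H^{-s}}\geq \tfrac{q-2}{2}\bigl(t_j^2-\lambda\|u_j\|_{L^2(\Omega)}^2\bigr),
\]
and since the left-hand side is $O(1+t_j)$ by the definition of a $(PS)_c$ sequence, dividing by $t_j^2$ and passing to the limit gives $1\leq \lambda\|v_0\|_{L^2(\Omega)}^2$; in particular, $v_0\not\equiv 0$. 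Put $A\vcentcolon=\{x\in\Omega:v_0(x)\neq 0\}$, a set of positive measure on which $|u_j(x)|=|v_j(x)|\,t_j\to+\infty$ a.e. The superquadratic lower bound $(f_2)$ produces the non-negative minorant
\[
\frac{F(x,u_j(x))+a_3}{t_j^2}\geq a_2|v_j(x)|^q\,t_j^{q-2},
\]
whose pointwise liminf on $A$ equals $+\infty$. Fatou's lemma then forces $\int_\Omega F(x,u_j)/t_j^2\,dx\to+\infty$, contradicting the identity
\[
\lim_{j\to\infty}\int_\Omega \frac{F(x,u_j(x))}{t_j^2}\,dx = \tfrac{1}{2}-\tfrac{\lambda}{2}\|v_0\|_{L^2(\Omega)}^2<+\infty,
\]
obtained by dividing $I_\lambda(u_j)=O(1)$ by $t_j^2$ and invoking the $L^2$-convergence of $\{v_j\}$.

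For the strong convergence, the boundedness of $\{u_j\}$ yields, along a subsequence, $u_j\rightharpoonup u_0$ in $H_{\SD}^s(\Omega)$ and, by the subcritical compact embeddings, $u_j\to u_0$ in $L^r(\Omega)$ for every $r\in[1,2_s^*)$. Testing the convergence $I_\lambda'(u_j)\to 0$ in $H^{-s}(\Omega)$ against $u_j-u_0$ gives
\[
\langle u_j,u_j-u_0\rangle_{H_{\SD}^s}=\lambda\int_\Omega u_j(u_j-u_0)\,dx+\int_\Omega f(x,u_j)(u_j-u_0)\,dx+o(1).
\]
The first integral vanishes by $L^2$-convergence; the second vanishes by H\"older's inequality, using the boundedness of $\{f(\cdot,u_j)\}$ in $L^{q/(q-1)}(\Omega)$ (from $(f_1)$ and the uniform $L^q$-bound) together with $u_j\to u_0$ in $L^q(\Omega)$. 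Hence $\langle u_j,u_j-u_0\rangle_{H_{\SD}^s}\to 0$, and combining with $\langle u_0,u_j-u_0\rangle_{H_{\SD}^s}\to 0$ (weak convergence) yields $\|u_j-u_0\|_{H_{\SD}^s(\Omega)}\to 0$.

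\textbf{Main obstacle.} The genuinely delicate step is the boundedness. The indefinite quadratic term $-\tfrac{\lambda}{2}\|u\|_{L^2(\Omega)}^2$ forbids concluding directly from $(f_4)$ as soon as $\lambda\geq\Lambda_1$, since one cannot absorb $\lambda\|u_j\|_{L^2(\Omega)}^2$ into $t_j^2$. The resolution relies on orchestrating $(f_4)$, the strictly superquadratic lower bound $(f_2)$, and the subcritical Sobolev compactness, which together trap the normalized sequence $\{v_j\}$ into a configuration incompatible with the energy level being finite.
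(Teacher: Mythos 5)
Your proof is correct, and the strong-convergence half is essentially the paper's argument (you test $I_\lambda'(u_j)\to 0$ against $u_j-u_0$ and use the compact embeddings and the $L^{q/(q-1)}$-bound from $(f_1)$, whereas the paper tests against $u_j$ and $u_0$ separately and passes to the limit in the norms; these are equivalent). The boundedness step, however, follows a genuinely different route. The paper argues directly: it estimates $I_\lambda(u_j)-\tfrac{1}{\mu}\langle I_\lambda'(u_j),u_j\rangle_{H^{-s}}$ for a fixed $\mu\in(2,q)$, uses $(f_4)$ to turn $-\int_\Omega\bigl(F-\tfrac{1}{\mu}fu_j\bigr)dx$ into $\bigl(\tfrac{q}{\mu}-1\bigr)\int_\Omega F\,dx$, invokes $(f_2)$ to get a positive multiple of $\|u_j\|_{L^q(\Omega)}^q$, and absorbs the indefinite term $\lambda\|u_j\|_{L^2(\Omega)}^2$ into that $L^q$-term via an $\varepsilon$-Young inequality; this yields an explicit a priori bound with no subsequence extraction. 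You instead argue by contradiction with the normalized sequence $v_j=u_j/t_j$: $(f_4)$ applied to $qI_\lambda(u_j)-\langle I_\lambda'(u_j),u_j\rangle_{H^{-s}}=O(1+t_j)$ forces $\lambda\|v_0\|_{L^2(\Omega)}^2\geq 1$, hence $v_0\not\equiv 0$, and then $(f_2)$ plus Fatou's lemma on the set $\{v_0\neq 0\}$ makes $\int_\Omega F(x,u_j)\,dx/t_j^2$ blow up, contradicting the finite limit extracted from $I_\lambda(u_j)/t_j^2\to 0$. Both arguments need $(f_2)$ with $a_2>0$ and $(f_4)$; the paper's version buys a quantitative bound uniform in $\lambda$ on bounded intervals and avoids compactness entirely at this stage, while yours is softer, dispenses with the choice of $\mu$ and the tuning of $\varepsilon$, and adapts more easily to situations where the superquadratic lower bound holds only at infinity. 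Either is acceptable here.
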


\begin{proof}
	Let $\lambda>0$, $c\in\R$ and  let $\{u_j\}\subset H_{\SD}^s(\Omega)$ satisfy
		\begin{equation}\label{PSsequence}
			I_\lambda(u_j)\to c \quad\text{and } \quad I'_\lambda(u_j)\to 0 \quad \text{as } j\to\infty.
		\end{equation}
	As usual, we start proving that $\{u_j\}$ is bounded in $H_{\SD}^s(\Omega)$. First, we note that
	\begin{equation*}
		\left|I_\lambda(u_j)\right|\leq c_1\quad\text{\and}\quad \left|\left\langle I_\lambda'(u_j),\frac{u_j}{\|u_j\|_{H_{\SD}^s(\Omega)}}\right\rangle_{H^{-s}} \right| \leq c_2,
	\end{equation*}
	for some $c_1,c_2>0$.  On the other hand, because of $(f_2)$ and $(f_3)$ and $\varepsilon$-Young's inequality
	$$
	t^2\leq\varepsilon t^q+\varepsilon^{-\frac{2}{q-2}}\left(\frac{2}{q}\right)^{\frac{2}{2-q}}\frac{q-2}{q}, \quad t>0,\; \varepsilon>0,
	$$
	we get
	\begin{align*}
		c_1+c_2\left\| u_j\right\|_{H_{\SD}^s(\Omega)} &\geq I_\lambda(u_j)-\frac{1}{\mu} \left\langle  I_\lambda'(u_j), u_j\right\rangle_{H^{-s}}\\
		& = \left(\frac{1}{2}-\frac{1}{\mu}\right)\left(\left\| u_j\right\|_{H_{\SD}^s(\Omega)}^2 - \lambda\left\|u_j \right\|_{L^2(\Omega)}^2\right)-\int_\Omega F(x,u_j)-\frac{1}{\mu}f(x,u_j)u_j\,dx\\
		& \geq \left(\frac{1}{2}-\frac{1}{\mu}\right)\left(\left\| u_j\right\|_{H_{\SD}^s(\Omega)}^2 - \lambda\left\|u_j \right\|_{L^2(\Omega)}^2\right) +\left(\frac{q}{\mu}-1 \right) \int_\Omega F(x,u_j) dx\\
		&\geq \left(\frac{1}{2}-\frac{1}{\mu}\right)\left(\left\| u_j\right\|_{H_{\SD}^s(\Omega)}^2 - \lambda\left\|u_j \right\|_{L^2(\Omega)}^2\right)\\
		&\ \ + a_2 \left(\frac{q}{\mu}-1 \right)\left\| u_j\right\|_{L^q(\Omega)}^q -a_3\left(\frac{q}{\mu}-1 \right)|\Omega|\\
		& \geq \left(\frac{1}{2}-\frac{1}{\mu}\right)\left\| u_j\right\|_{H_{\SD}^s(\Omega)}^2 +\left( a_2\left(\frac{q}{\mu}-1\right)-\lambda\left(\frac{1}{2}-\frac{1}{\mu}\right)\varepsilon \right) \left\| u_j\right\|_{L^q(\Omega)}^q\\
		&\ \ -\lambda\left(\frac{1}{2}-\frac{1}{\mu}\right)\left(\frac{2}{q}\right)^{\frac{2}{q-2}}\frac{q-2}{q}|\Omega|\varepsilon^{-\frac{2}{q-2}} -a_3\left(\frac{q}{\mu}-1 \right)|\Omega|.
	\end{align*}
	Taking $\varepsilon\in\left(0,\frac{2a_2(q-\mu)}{\lambda(\mu-2)} \right)$ we conclude
	$$
	c_1+c_2\left\| u_j\right\|_{H_{\SD}^s(\Omega)}\geq \left(\frac{1}{2}-\frac{1}{\mu}\right)\left\| u_j\right\|_{H_{\SD}^s(\Omega)}^2 -C_\varepsilon -c_3,
	$$
	for some $c_3>0$, so $\{u_j\}$ is bounded in $H_{\SD}^s(\Omega)$. Then, up to a subsequence, $u_j\rightharpoonup u_0\in H_{\SD}^s(\Omega)$, i.e.
	\begin{equation}\label{relatscalarprod}
		\left\langle u_j,\psi \right\rangle_{H_{\SD}^s(\Omega)}\to\left\langle u_0,\psi\right\rangle_{H_{\SD}^s(\Omega)}, \quad \text{for all } \psi\in H_{\SD}^s(\Omega).
	\end{equation}
	Next, let us note that
	\begin{equation}\label{strong}
		\begin{split}
			\|u_j-u_0\|_{H_{\Sigma_{\mathcal{D}}}^s(\Omega)}^2&=\|u_j\|_{H_{\Sigma_{\mathcal{D}}}^s(\Omega)}^2+\|u_0\|_{H_{\Sigma_{\mathcal{D}}}^s(\Omega)}^2-2\langle u_j,u_0\rangle_{H_{\Sigma_{\mathcal{D}}}^s}.
		\end{split}
	\end{equation}
	By the compactness of the embedding $H_{\Sigma_{\mathcal{D}}}^s(\Omega)\hookrightarrow L^{r}(\Omega)$, $1\leq r< 2_s^*$,
	up to a subsequence, we then obtain
	\begin{equation}\label{eq1}
		\begin{split}
			u_j&\to u_0\quad\text{in }L^r(\Omega),\ \text{for any } 1\leq r< 2_s^*,\\
			u_j&\to u_0\quad\text{a.e. in } \Omega,
		\end{split}
	\end{equation}
	for $j\to+\infty$. By the continuity in $t\in\R$ of the map $t\mapsto f(\cdot,t)$ and the Dominated Convergence Theorem, it follows that
	\begin{equation}\label{eq2}
		\int_\Omega f(x,u_j(x))u_j(x)dx \to \int_\Omega f(x,u_0(x))u_0(x)dx 
	\end{equation}
	and
	\begin{equation}\label{eq3}
		\int_\Omega f(x,u_j(x))u_0(x)dx \to \int_\Omega f(x,u_0(x))u_0(x)dx 
	\end{equation}
	as $j\to \infty$. Moreover, since by \eqref{PSsequence}
	\begin{equation*}
		\begin{split}
			\langle I_\lambda'(u_j),u_j\rangle_{H^{-s}}&=\int_{\Omega}|(-\Delta)^{\frac{s}{2}}u_j|^{2}dx-\lambda\int_{\Omega}|u_j(x)|^2dx-\int_{\Omega}f(x,u_j(x))u_j(x)dx\\
			&= \left\langle u_j,u_j \right\rangle_{H_{\SD}^s(\Omega)}-\lambda\int_{\Omega}|u_j(x)|^2dx-\int_{\Omega}f(x,u_j(x))u_j(x)dx\to 0,
		\end{split}
	\end{equation*}
	thus, by \eqref{eq1} and \eqref{eq2} we find
	\begin{equation}\label{eq4}
		\left\langle u_j,u_j \right\rangle_{H_{\SD}^s(\Omega)}\to\lambda\int_{\Omega}|u_0(x)|^2dx+\int_{\Omega}f(x,u_0(x))u_0(x)dx \quad\text{as } j\to \infty.
	\end{equation}
	Combining \eqref{PSsequence} and \eqref{relatscalarprod}, tested with $\psi=u_0$, we also get
	\begin{equation}\label{eq5}
		\langle u_0,u_0 \rangle_{H_{\SD}^s(\Omega)}=\lambda\int_{\Omega}|u_0(x)|^2dx+\int_{\Omega}f(x,u_0(x))u_0(x)dx.
	\end{equation}
	Therefore, we conclude
	\begin{equation*}
		\left\|  u_j\right\|_{H_{\SD}^s(\Omega)}^2 \to \| u_0\|_{H_{\SD}^s(\Omega)}^2, \quad\text{as }j\to \infty.
	\end{equation*}
	Then, by \eqref{strong}
	\begin{equation*}
		\begin{split}
			\|u_j-u_0\|_{H_{\Sigma_{\mathcal{D}}}^s(\Omega)}^2&=\|u_j\|_{H_{\Sigma_{\mathcal{D}}}^s(\Omega)}^2+\|u_0\|_{H_{\Sigma_{\mathcal{D}}}^s(\Omega)}^2-2\langle u_j,u_0\rangle_{H_{\Sigma_{\mathcal{D}}}^s}\\
			& \to 2\| u_0\|_{H_{\SD}^s(\Omega)}^2-2\langle u_0,u_0 \rangle_{H_{\SD}^s(\Omega)}=0,
		\end{split}
	\end{equation*}
and the proof is concluded.
\end{proof}

\section{Proof of Theorem \ref{mainresult}}\label{proofofmainresult}
We premise this result related to the behavior of $I_\lambda$ on $\mathbb H_{k+m-1}$ as $\lambda$ approaches an eigenvalue $\Lambda_k$ of multiplicity $m$.

\begin{lemma}\label{lemaa}
	Let $k\in\N$, $k\geq 2$, and let $\Lambda_k$ be an eigenvalue of multiplicity $m\in\N$. Then
	$$
	\lim_{\lambda\to\Lambda_k}\sup_{u\in \mathbb{H}_{k+m-1}}I_\lambda(u)=0.
	$$
\end{lemma}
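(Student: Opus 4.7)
The plan is to combine three elementary observations, all of which take place in the finite-dimensional space $\mathbb H_{k+m-1}$ and therefore avoid any nonlocal subtleties. Since $I_\lambda(0)=0$, we automatically have $\sup_{\mathbb H_{k+m-1}}I_\lambda\ge 0$ for every $\lambda$, so only the matching bound $\limsup_{\lambda\to\Lambda_k}\sup_{\mathbb H_{k+m-1}}I_\lambda\le 0$ requires an argument. I would obtain it by reducing to a fixed compact ball, then passing to the uniform limit $\lambda\to\Lambda_k$, and finally computing $\sup I_{\Lambda_k}|_{\mathbb H_{k+m-1}}$ explicitly.

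First, I would exhibit a radius $R>0$, independent of $\lambda$ in a neighbourhood of $\Lambda_k$, such that $\sup_{\mathbb H_{k+m-1}}I_\lambda=\sup_{B_R(\mathbb H_{k+m-1})}I_\lambda$. Using $(f_2)$ to estimate $\int_\Omega F(x,u)\,dx\ge a_2\|u\|_{L^q(\Omega)}^q-a_3|\Omega|$ and dropping the non-positive term $-\tfrac{\lambda}{2}\|u\|_{L^2(\Omega)}^2$ (valid for $\lambda>0$), I would get
$$
I_\lambda(u)\le\tfrac{1}{2}\|u\|_{H_{\SD}^s(\Omega)}^2-a_2\|u\|_{L^q(\Omega)}^q+a_3|\Omega|.
$$
The equivalence of $\|\cdot\|_{L^q(\Omega)}$ and $\|\cdot\|_{H_{\SD}^s(\Omega)}$ on the finite-dimensional subspace $\mathbb H_{k+m-1}$ converts $\|u\|_{L^q(\Omega)}^q$ into a positive constant multiple of $\|u\|_{H_{\SD}^s(\Omega)}^q$, and since $q>2$ the resulting upper bound tends to $-\infty$ as $\|u\|_{H_{\SD}^s(\Omega)}\to\infty$, uniformly in $\lambda>0$. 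The desired $R$ follows.

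Second, on the compact set $B_R(\mathbb H_{k+m-1})$ the identity
$$
I_\lambda(u)-I_{\Lambda_k}(u)=-\tfrac{1}{2}(\lambda-\Lambda_k)\|u\|_{L^2(\Omega)}^2,
$$
together with the uniform $L^2$-boundedness of $u\in B_R$, gives $\|I_\lambda-I_{\Lambda_k}\|_{L^\infty(B_R)}\to 0$ as $\lambda\to\Lambda_k$, so $\sup_{B_R}I_\lambda\to\sup_{B_R}I_{\Lambda_k}$. It only remains to evaluate the latter: for every $u\in\mathbb H_{k+m-1}$, Lemma~\ref{varcharacteigen} yields $\|u\|_{H_{\SD}^s(\Omega)}^2\le\Lambda_{k+m-1}\|u\|_{L^2(\Omega)}^2=\Lambda_k\|u\|_{L^2(\Omega)}^2$, while $(f_4)$ forces $F(\cdot,u)\ge 0$. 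These two facts combined give $I_{\Lambda_k}|_{\mathbb H_{k+m-1}}\le 0$, with equality at $u=0$, hence $\sup_{\mathbb H_{k+m-1}}I_{\Lambda_k}=0$. Chaining the three steps delivers the limit $0$.

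No real obstacle is anticipated. The only slightly non-trivial ingredient is the uniform-in-$\lambda$ reduction to a fixed compact ball in the first step, but it is tailor-made for $(f_2)$ and the finite-dimensionality of $\mathbb H_{k+m-1}$; everything else is either a one-line continuity argument or a direct application of Lemma~\ref{varcharacteigen}.
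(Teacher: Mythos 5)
Your proof is correct and rests on exactly the same three ingredients as the paper's: assumption $(f_2)$ together with the equivalence of norms on the finite-dimensional space $\mathbb{H}_{k+m-1}$ to control large $\|u\|_{H_{\SD}^s(\Omega)}$, the elementary continuity of $\lambda\mapsto I_\lambda(u)$, and the combination of Lemma~\ref{varcharacteigen} with $F\geq 0$ from $(f_4)$ to get $I_{\Lambda_k}\leq 0$ on $\mathbb{H}_{k+m-1}$. The only difference is organizational: you argue directly (uniform localization to a fixed ball, uniform convergence there, explicit evaluation of $\sup I_{\Lambda_k}=0$), whereas the paper runs the contrapositive as a proof by contradiction with a maximizing sequence split into bounded and unbounded cases; your version is, if anything, slightly cleaner since it does not need to assert that the supremum is attained.
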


\begin{proof}
	By $(f_2)$, $I_\lambda$ attains a maximum on $\mathbb{H}_{k+m-1}$. By contradiction, assume that there exists a sequence $\{\mu_j\}$ such that
	\begin{equation}\label{eqc}
		\mu_j\to\Lambda_k, \quad \text{as } j\to\infty,
	\end{equation}
	and $\{u_j\}\subset \mathbb{H}_{k+m-1}$ and $\varepsilon>0$ such that
	\begin{equation}\label{eqd}
		I_{\mu_j}(u_j)=\sup\limits_{u\in \mathbb{H}_{k+m-1}}I_{\mu_j}(u)\geq\varepsilon.
	\end{equation}
	for any $j\in\mathbb{N}$. We have two options, according to whether $\{u_j\}$ is unbounded in $H_{\SD}^s(\Omega)$ or not. In the first case, without loss of generality we can take $\left\| u_j\right\|_{H_{\SD}^s(\Omega)}\to +\infty$; because of \eqref{eqd} and $(f_2)$, we get
	\begin{equation*}
		0<\varepsilon\leq I_{\mu_j}(u_j)\leq\frac{1}{2}\left\| u_0\right\|_{H_{\SD}^s(\Omega)}^2-\frac{\mu_j}{2}\int_{\Omega}|u_0(x)|^2dx-a_2\int_{\Omega}|u_j(x)|^qdx+a_3|\Omega|\to-\infty,
	\end{equation*}
	as $j\to\infty$, since $q>2$ and all norms on $H_{k+m-1}$ are equivalent. If, on the contrary,  $\{u_j\}$ is bounded, by \eqref{pre} and \eqref{eqc} we get
	\begin{equation*}
		I_{\mu_j}(u_j)\to I_{\Lambda_k}(u_0),
	\end{equation*}
	 and by $(f_4)$, Lemma \ref{varcharacteigen} and \eqref{eqd}, we find
	\begin{equation*}
		\begin{split}
			\varepsilon\leq I_{\Lambda_k}(u_0)&=\frac{1}{2}\left\| u_0\right\|_{H_{\SD}^s(\Omega)}^2-\frac{\Lambda_k}{2}\int_{\Omega}|u_0(x)|^2dx-\int_{\Omega}F(x,u_0(x))dx\\
			&\leq\frac12(\Lambda_{k+m-1}-\Lambda_k)\int_{\Omega}|u_0(x)|^2dx-\int_{\Omega}F(x,u_0(x))dx\\
			&\leq 0.
		\end{split}
	\end{equation*}
Being both cases contradictory, we get the conclusion.
\end{proof}


\begin{proof}[Proof of Theorem \ref{mainresult}]
Let us first show that, if $\Lambda_k$ has multiplicity $m\in\N$, 
then there exists $\delta>0$ such that for all $\lambda\in(\Lambda_{k}-\delta,\Lambda_{k})$, problem \eqref{problem} has two non-trivial solutions $u_i$ satisfying
\begin{equation}\label{bound}
	0<I_{\lambda}(u_i)\leq\sup\limits_{u\in \mathbb{H}_{k+m-1}}I_\lambda(u),
\end{equation}
for $i=1,2$. To this end, fix $\sigma>0$ and let $\varepsilon_\sigma>0$ be as in Proposition \ref{nablacond}. Then, for all $\lambda\in[\Lambda_{k-1}+\sigma,\Lambda_{k+m}-\sigma]$ and for every $\varepsilon',\varepsilon''\in(0,\varepsilon_\sigma)$, condition $(\nabla)(I_\lambda,\mathbb{H}_{k-1}\oplus \mathbb{P}_{k+m-1},\varepsilon',\varepsilon'')$ holds. Then, on account of Lemma \ref{lemaa}, we can find $\delta\leq\sigma$ such that
\begin{equation}\label{eqe}
\sup\limits_{u\in\mathbb{H}_{k+m-1}}I_\lambda(u)<\varepsilon''
\end{equation}
if $\lambda\in (\Lambda_k-\delta,\Lambda_k)$. Then, taking also account of Propositions \ref{prop_supinf} and \ref{propPS}, by Theorem \ref{nablathm}, $I_\lambda$ has two critical points $u_1$ and $u_2$ such that $I_\lambda(u_i)\in [\varepsilon',\varepsilon'']$. In particular, both $u_1$ and $u_2$ are two non-trivial solutions to \eqref{problem} satisfying \eqref{bound} since $\varepsilon''>0$ is arbitrary.
For the existence of a third solution to \eqref{problem} when
$\lambda\in(\Lambda_{k-1},\Lambda_k)$ we show that $I_\lambda$ has an additional linking structure and so we can appeal to the classical \cite[Theorem 5.3]{rab1986minimax}.
Indeed, considering the decomposition $H_{\SD}^s(\Omega)=\mathbb{H}_{k-1}\oplus\mathbb{P}_{k-1}$, by retracing the proof of Proposition \ref{prop_supinf}, there exist $\varrho,\beta>0$ such that has
\begin{equation*}
I_\lambda(u)\geq\beta,\qquad\text{for all }u\in\partial B_\varrho(\mathbb{P}_{k-1}).
\end{equation*}
On the other hand, by \eqref{diseqinHk-1}, we have ${I_\lambda}_{|\mathbb{H}_{k-1}}\leq 0$. By \eqref{ineqilambda} one has also $I_\lambda(u)\leq 0$ for $u\in\mathbb{H}_{k-1}\oplus\text{span}\{\varphi_k\}=\mathbb{ H}_k$ and $\left\| u\right\|_{H_{\SD}^s(\Omega)}\geq\bar R$, for some $\bar R>\varrho$. Then, for $R>0$ large, setting
$$
Q:=(B_R(H_{\SD}^s(\Omega))\cap\mathbb{H}_{k-1})\oplus\{r\varphi_k,\ r\in(0,R)\},
$$
one has ${I_\lambda}_{|\partial Q}\leq 0$ (here $\partial Q$ refers to the boundary of $Q$ relative to $\mathbb{H}_{k-1}\oplus\text{span}\{\varphi_k\}$). So, by \cite[Remark 5.5]{rab1986minimax}, $I_\lambda$ has a third critical point $u_3$ satisfying
\begin{equation*}
I_\lambda(u_3)\geq \inf\limits_{u\in \partial B_\varrho(\mathbb{P}_{k-1})}I_\lambda(u)\geq\beta.
\end{equation*}
Moreover, $u_3\neq u_i$, $i=1,2$, because, on account of \ref{bound} and Lemma \ref{lemaa}, for $\lambda$ close to $\Lambda_k$ we obtain
\begin{equation*}
I_\lambda(u_i)\leq\sup\limits_{u\in\mathbb{H}_{k+m-1}}I_\lambda(u)<\inf\limits_{u\in \partial B_\varrho(\mathbb{P}_{k-1})}I_\lambda(u).
\end{equation*}
The theorem is then completely proved.
\end{proof}


\end{document}